\documentclass[11pt,a4paper]{article}

\usepackage{indentfirst,csquotes}
\usepackage{authblk}

\usepackage[bookmarks=false]{hyperref}
   \hypersetup{colorlinks=true,
 linkcolor=blue!80,
 citecolor=orange!70!black,
 urlcolor=blue!80}

\usepackage{a4wide}

\usepackage{amsthm,mathtools}
\usepackage{amsmath,amssymb}
\usepackage{amsfonts}
\usepackage[thmtools-compat]{keytheorems}
\usepackage{zref-clever}
\zcsetup{
       cap = true,
    abbrev = false,
    nameinlink,
}

\usepackage{xcolor,paralist,titlesec,fancyhdr,etoolbox}
\usepackage[figuresright]{rotating}

\newtheorem{theorem}{Theorem}[section]
\newtheorem{lemma}[theorem]{Lemma}

\newtheorem{corollary}[theorem]{Corollary}

\newtheorem{proposition}[theorem]{Proposition}
\newtheorem{remark}[theorem]{Remark}
\usepackage{tikz}

\begin{document}
\title{On $\{2\}$-Roman graph recognition of Partner Limited graphs}

\author[1,2]{Lara Fernández\thanks{lara@fceia.unr.edu.ar}$^,$}
\author[1,2]{Valeria Leoni}
\affil[1]{FCEIA, Universidad Nacional de Rosario, Argentina}
\affil[2]{CONICET, Argentina}

\maketitle

\begin{abstract}
Given a graph $G$ with vertex set $V$, $f : V \rightarrow \{0, 1, 2\}$ is a \emph{Roman $\{2\}$-dominating function} (or \emph{italian dominating function}) of $G$ if for every vertex $v\in V$ with $f(v) =0$, either there exists a vertex $u$ adjacent to $v$ with $f(u) = 2$, or two distinct vertices $x,\; y$ both adjacent to $v$ with $f(x)=f(y)=1$ (Chellali et al. 2016). Every graph $G$ satisfies $\gamma_{\{R2\}}(G) \leq 2\gamma(G)$, where $\gamma_{\{R2\}}(G)$ denotes the minimum weight of a $\{2\}$-Roman dominating function of $G$ and $\gamma(G)$ is the domination number of $G$.   \emph{$\{2\}$-Roman graphs}  are those for which the equality is reached (Klostermeyer et al. 2019). A  characterization  ---in terms of four graph operations--- of $\{2\}$-Roman trees was  given by  Henning et al. in 2017. In 2025, Ferrari et al. characterized the $\{2\}$-Roman property  by the existence of a minimum $\{2\}$-Roman dominating function of $G$ that assumes only $0, 2$-values. Afterwards in 2025,  Bešter Štorgel et al. introduced  the problem of recognizing $\{2\}$-Roman graphs,  proved polinomiality  for middle graphs, and characterized \hbox{$\{2\}$-Roman} split graphs that can be decomposed with respect to the split join operation into two smaller split graphs.  Recognition complexity is still open for general graphs. 
     
     In this paper we contribute to this line of research. We study the \hbox{$\{2\}$-Roman}  property on graphs that can be decomposed into two smaller graphs with respect to the join and union operations, allowing to completely characterize the $\{2\}$-Roman property.  The 4-path is the  non trivial connected non $\{2\}$-Roman graph with the fewest number of vertices and edges.  We  classify the $\{2\}$-Roman property within specific families of non decomposable graphs  with a limited number of 4-paths which are present in the decomposition of partner limited graphs; these are well-labelled spiders, the graphs in ZOO and some special split graphs. 
\end{abstract}

\section{Introduction, preliminaries and definitions}

All graphs in this paper are undirected and simple. Let $G$ be a graph, and let $V(G)$ and $E(G)$ denote its vertex and edge sets, respectively.
Whenever it is clear from the context, we simply write $V$ and $E$. 

For a graph $G=(V,E)$, the \emph{complementary} graph of $G$, or complement graph, is the graph
$\overline{G}=(V, \overline{E})$, where $\overline{E}=\{uv: u, v \in V, \; uv\notin E\}$.
Two vertices of $V$ are \emph{adjacent} in $G$ if there is an edge of $E$ that includes both. The \emph{open neighborhood} of $v\in V$, $N_G(v)$, is the set of all vertices adjacent to $v$ in $G$, that is, $N_G(v)=\{w: vw\in E\}$. The \emph{closed neighborhood} of $v\in V$, $N_G[v]$ is the set $N_G(v)\cup \{v\}$. For a vertex $v\in V$, the number of vertices to which $v$ is adjacent is the \emph{degree} of $v$ in $G$, denoted by $deg_G(v)$. The minimum degree over the vertices of $G$ is denoted by $\delta(G)$.

A \emph{pendant} vertex is a vertex of degree one. A \emph{universal} vertex is a vertex of degree $|V|-1$.

An \emph{n-path} (\emph{n-cycle}) is a path (cycle) on $n$ vertices and is denoted by $P_n$ ($C_n$). 

A graph is \emph{split} if its vertex set is partitioned into $S$ and $K$, where $S$ is a stable set and $K$ is a clique. For a split graph we use the notation $G=(S,K)$. Split graphs can be recognized in linear time from \cite{Hammer1981}.

Two graphs $G$ and $H$ are \emph{isomorphic} if there is a bijection $f$ between $V(G)$ and $V(H)$
 such that $uv \in E(G)$ if and only if $f(u)f(v) \in E(H)$.
 
Given a graph $G$ and $S\subseteq V$, the \emph{subgraph induced by $S$} is the graph with vertex set $S$ and such that two vertices of $S$ are adjacent if and only if they are adjacent in $G$. Besides, $G \setminus S$ denotes the subgraph of $G$ induced by $V\setminus S$. Given a set of graphs $\mathcal{H}$, we say that $G$ is $\mathcal{H}$-free if $G$ has no graph in $\mathcal{H}$ as induced subgraph. 

Given two graphs $G_1$ and $G_2$, the \emph{union} of $G_1$ and $G_2$, denoted by $G_1 \cup G_2$, is the binary operation that returns the graph with vertex set $V(G_1) \cup V(G_2)$ and edge set $E(G_1) \cup E(G_2)$, and the \emph{join} of $G_1$ and $G_2$, denoted by $G_1 \wedge G_2$, is the binary operation that returns the graph with vertex set $V(G_1) \cup V(G_2)$ and edge set $E(G_1) \cup E(G_2)$ together with all edges with one endpoint in $G_1$ and the other in $G_2$.

It is known that, for a graph $G$, if $G$ or the complementary
graph of $G$ is not connected, $G$ is the union or the join of
two smaller graphs, respectively. This linear decomposition procedure can
be performed iteratively on every connected component of $G$ or its complementary graph, until all subgraphs and their complementary graphs are connected. For details, see for instance \cite{Habib1994}. If both $G$ and $\overline{G}$ are connected, we will say that $G$ is \emph{non decomposable}.

 Given a graph $G=(V,E)$, $f : V \rightarrow \{0, 1, 2\}$ is a \emph{Roman $\{2\}$-dominating function} (or \emph{italian dominating function}) of $G$ if for every vertex $v\in V$ with $f(v) =0$, either there exists a vertex $u \in V$ adjacent to $v$ with $f(u) = 2$, or two distinct vertices $x, y \in V$ both adjacent to $v$ with $f(x)=f(y)=1$ \cite{Chellali2016}. The minimum weight of a Roman $\{2\}$-dominating function of $G$ is called the \emph{Roman $\{2\}$-domination number} of $G$ and is denoted by 
$\gamma_{\{R2\}}(G)$, where the \emph{weight} of a function $f$ defined on $V$ is the value $f(V) = \sum_{v\in V} f(v)$.

Every graph $G$ satisfies $\gamma_{\{R2\}}(G) \leq 2\gamma(G)$, where $\gamma_{\{R2\}}(G)$ denotes the minimum weight of a $\{2\}$-Roman dominating function of $G$ and $\gamma(G)$ is the domination number of $G$. Graphs for which equality is attained are called \emph{$\{2\}$-Roman graphs} \cite{Klostermeyer}. It is not hard to notice that $G$ is a $\{2\}$-Roman graph if and only if every connected component is a $\{2\}$-Roman graph. For a graph $G$, $\gamma_{\{R2\}}(G)=1$ if and only if $|V(G)|=1$ and thus, the graph with only one vertex is not $\{2\}$-Roman. Also, $\gamma_{\{R2\}}(G)=2$ if and only if $G$ has a universal vertex or if $G$ has two vertices that are adjacent to every other vertex, but each other. 
The $\{2\}$-Roman property of graphs is characterized in \cite{Ferrari2025} by the existence of a minimum $\{2\}$-Roman dominating function of $G$ that assumes only $0, 2$-values. In \cite{vStorgel2025}, Bešter Štorgel et al. introduce for the first time the problem of recognizing $\{2\}$-Roman graphs and prove that it is polynomial for  middle graphs, but recognition remains open for general  graphs. They also characterize \hbox{$\{2\}$-Roman} split graphs that can be decomposed with respect to the split join operation into two smaller split graphs. 

It is not hard to check that the smallest connected non trivial non $\{2\}$-Roman graph is $P_4$. Although the $\{2\}$-Roman property  is not inherited by induced subgraphs, in this work with the purpose of delve deeper into this open study, we explore graph classes that restrict the number of induced 4-paths.  
In Section \ref{sec:operations}, we analyze  the behavior of Roman $\{2\}$-domination on the join and union of graphs and then  completely characterize the $\{2\}$-Roman property  with respect to the two smaller graphs involved in these operations. In Section~\ref{sec:special}, we  obtain closed formulas for the Roman $\{2\}$-domination number of  the non decomposable  graphs  ---with respect to the union and join operation--- that are present in the decomposition  of partner-limited graphs.
In Section \ref{sec:sec5}, we characterize  the $\{2\}$-Roman property among the non decomposable graphs addressed in Section~\ref{sec:special}. 

\section{$\{2\}$-Roman union and join graphs}\label{sec:operations}

Firstly, from the definition of the union of two graphs, it holds $\gamma_{\{R2\}}(G_1\cup G_2)=\gamma_{\{R2\}}(G_1)+\gamma_{\{R2\}}(G_2)$ for any graphs $G_1$ and $G_2$. Secondly, for the join of two graphs, it is clear that $\gamma_{\{R2\}}(G_1\wedge G_2)\leq \min\{\gamma_{\{R2\}}(G_1), \gamma_{\{R2\}}(G_2)\}$ for any graphs $G_1$ and $G_2$. We next show how to obtain the exact value of $\gamma_{\{R2\}}(G_1\wedge G_2)$.

We use the notation $f=(V_0, V_1, V_2)$ to represent a Roman $\{2\}$-dominating function $f$ of $G$, where $V_i$ is the subset of vertices $v\in V$ such that $f(v)=i$.
Besides, for simplicity, we say that $f$ is a 
$\gamma_{\{R2\}}(G)$-function when $f$ is a Roman $\{2\}$-dominating function of $G$ with ---minimum--- weight $\gamma_{\{R2\}}(G)$.  

For a positive integer $r$, we  will use the notation $[1,r]$ to indicate the finite set of integer numbers between 1 and $r$, i.e the set $\{1, \ldots, r\}$.

\begin{theorem}\label{join}
Let $G_1$ and $G_2$ be two graphs with $|V(G_i)|\geq 2$ for $i=1,2$ and \[m=\min\{\gamma_{\{R2\}}(G_1), \gamma_{\{R2\}}(G_2)\}.\] Then 
\begin{enumerate}
  \item $\gamma_{\{R2\}}(G_1\wedge G_2)=2$ if and only if $m= 2$;
  
  \item $\gamma_{\{R2\}}(G_1\wedge G_2)=3$ if and only if $m=3$ or, $m=4$ and $\gamma(G_i)=2$ for some $i=1,2$;
  \item $\gamma_{\{R2\}}(G_1\wedge G_2)=4$ if and only if $m\geq 4$ and $\gamma(G_i)>2$ for $i=1,2$.
\end{enumerate}
\end{theorem}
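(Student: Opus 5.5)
We first pin down the range of $\gamma_{\{R2\}}(G_1\wedge G_2)$. Since $|V(G_1\wedge G_2)|\geq 4>1$, we have $\gamma_{\{R2\}}(G_1\wedge G_2)\geq 2$. For the upper bound, pick $u\in V(G_1)$ and $v\in V(G_2)$ and assign $f(u)=f(v)=2$, zero elsewhere. Every vertex of $G_1$ is adjacent to $v$ and every vertex of $G_2$ is adjacent to $u$, so this function has weight $4$. Hence $\gamma_{\{R2\}}(G_1\wedge G_2)\in\{2,3,4\}$, and item 3 follows once items 1 and 2 are proved. For that it suffices to characterize when the value is $2$ and when it is at most $3$.

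\emph{Item 1.} We use the stated fact that $\gamma_{\{R2\}}(H)=2$ if and only if $H$ has a universal vertex or two nonadjacent vertices adjacent to all others. If $m=2$, say $\gamma_{\{R2\}}(G_1)=2$, then a witnessing vertex or pair in $G_1$ remains universal (resp. a nonadjacent pair dominating everything else) in the join, since it sees all of $G_2$. So the join has value $2$.

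Conversely, suppose the join has value $2$. If some vertex $w$ is universal there, with $w\in V(G_1)$ say, then $w$ is universal in $G_1$, giving $\gamma_{\{R2\}}(G_1)=2$ because $|V(G_1)|\geq 2$. If instead there is a nonadjacent pair $x,y$ adjacent to all others, then $x,y$ lie in the same $G_i$, since vertices from different sides are adjacent. The pair then witnesses $\gamma_{\{R2\}}(G_i)=2$. We also note $m\geq 2$ always, as each $|V(G_i)|\geq 2$.

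\emph{Value $3$.} We first show sufficiency. If $m=3$, a $\gamma_{\{R2\}}(G_i)$-function stays Roman $\{2\}$-dominating in the join: the other side's vertices, with value $0$, are adjacent to all of $V(G_i)$, which carries either a $2$ or at least two $1$'s because the weight is $3$. So the join has value at most $3$, hence exactly $3$ by item 1 since $m\neq 2$. If $m=4$ and $\gamma(G_1)=2$ with dominating set $\{a,b\}$, put $f(a)=f(b)=1$ and add a single vertex $c\in V(G_2)$ with $f(c)=1$. A vertex of $G_1\setminus\{a,b\}$ with value $0$ is adjacent to $c$ and to at least one of $a,b$, so it sees two $1$'s. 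A vertex of $G_2\setminus\{c\}$ sees both $a$ and $b$. The weight is $3$, and item 1 excludes $2$.

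Necessity is the main obstacle. Suppose $f$ is a Roman $\{2\}$-dominating function of the join with weight $3$, and neither $m=3$ nor ($m=4$ and some $\gamma(G_i)=2$) holds. By item 1, $m\geq 3$, so we may assume $m\geq 4$. We case on how the weight splits as $(f(V_1),f(V_2))$, writing $V_i=V(G_i)$.
\begin{itemize}
\item \emph{Split $(3,0)$.} Restricting $f$ to $G_1$ gives a Roman $\{2\}$-dominating function of $G_1$ of weight $3$, because vertices of $G_1$ get no help from $G_2$. This contradicts $m\geq 4$.
\item \emph{Split $(2,1)$ with $f(V_1)=2$.} Let $c$ be the vertex of $V_2$ with $f(c)=1$. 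Each vertex of $V_2\setminus\{c\}$ with value $0$ needs a $2$ or two $1$'s among its neighbours, and only $V_1$ and $c$ are available. If $V_1$ carries a single $2$ at some vertex $w$, then $w$ is universal in $G_1$, contradicting $m\geq 3$. So $V_1$ carries two $1$'s, at $a$ and $b$. A vertex of $V_1\setminus\{a,b\}$ then gets one $1$ from $c$ and needs at least one of $a,b$ as a neighbour. Hence $\{a,b\}$ dominates $G_1$, and $\gamma(G_1)\leq 2$. If $\gamma(G_1)=1$, then $G_1$ has a universal vertex and $\gamma_{\{R2\}}(G_1)=2$, a contradiction. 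So $\gamma(G_1)=2$, and then $\gamma_{\{R2\}}(G_1)\leq 2\gamma(G_1)=4$.
\end{itemize}
In the $(2,1)$ case this gives $m\leq\gamma_{\{R2\}}(G_1)\leq 4$, which together with $m\geq 4$ forces $m=4$ with $\gamma(G_1)=2$, the excluded situation. The symmetric splits are handled identically, which completes item 2.

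Finally, item 3 is the complement of items 1 and 2 within $\{2,3,4\}$. Note that if $m\geq 4$ and $\gamma(G_i)\leq 1$ for some $i$, then $G_i$ has a universal vertex and $m=2$, which is impossible; so $m\geq 4$ with $\gamma(G_i)\neq 2$ for both $i$ is the same as $\gamma(G_i)>2$ for $i=1,2$.
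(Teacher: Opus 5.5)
Your proof is correct and takes essentially the paper's route: the weight-4 function with a $2$ on each side gives the upper bound, and item 2 is a case analysis on how a weight-3 function splits between $V(G_1)$ and $V(G_2)$. You are slightly more explicit than the paper, deriving $m\le 4$ from $\gamma_{\{R2\}}(G_1)\le 2\gamma(G_1)$ and proving both directions of item 3; there, to pass from the complement of items 1--2 to $\gamma(G_i)>2$ for both $i$, note also that $\gamma(G_i)=2$ forces $m\le 4$, which excludes it when $m\ge 5$.
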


\begin{proof}
\begin{enumerate}
  \item Let $\gamma_{\{R2\}}(G_1\wedge G_2)=2$ and $g=(V_0,V_1,V_2)$ be a $\gamma_{\{R2\}}(G_1\wedge G_2)$-function. Since $|V(G_i)|\geq 2$ for $i=1,2$, then $m\geq 2$. 
  If $V_2=\{v\}$ for some $v\in V(G_1) \cup V(G_2)$, then every other vertex in $V(G_1)\cup V(G_2)$ is adjacent to $v$. Then $v$ is a universal vertex in $G_1\wedge G_2$. Moreover, $v$ is a universal vertex in $G_i$ for $i=1$ or $i=2$, and thus $\gamma_{\{R2\}}(G_i)= 2$ for that $i$, implying $m\leq2$. 
If $V_1=\{u,v\}$ and $u,v\in V(G_i)$ for some $i$, then every vertex in $V(G_i)\setminus \{u,v\}$ is adjacent to both $u$ and $v$. Hence $\gamma_{\{R2\}}(G_i)=2$, implying $m\leq2$.  Otherwise ---$u\in V(G_1)$ and $v\in V(G_2)$--- every vertex in $G_1$ is adjacent to $u$ and every vertex in $G_2$ is adjacent to $v$. Thus both $u$ and $v$ are universal vertices in $G_1$ and $G_2$ respectively, concluding that $\gamma_{\{R2\}}(G_1)= \gamma_{\{R2\}}(G_2)= 2$.
The converse is easy, since $1< \gamma_{\{R2\}}(G_1\wedge G_2)\leq \gamma_{\{R2\}}(G_i)$ for each $i$, $\gamma_{\{R2\}}(G_1)= 2$ (or $\gamma_{\{R2\}}(G_2)= 2$) implies $\gamma_{\{R2\}}(G_1\wedge G_2)=2$.
 \item Let $\gamma_{\{R2\}}(G_1\wedge G_2)=3$ and $g=(V_0,V_1,V_2)$ be a $\gamma_{\{R2\}}(G_1\wedge G_2)$-function. 
 From the previous item, $\gamma_{\{R2\}}(G_1)> 2$ and $\gamma_{\{R2\}}(G_2)> 2$. 
\begin{itemize}
  
\item When $V_2\neq \emptyset$ ($V_2=\{v\}$ for some $v$) and then $V_1=\{u\}$ for some $u$, then w.l.o.g. $\{u,v\} \subseteq V(G_1)$ (or $\{u,v\} \subseteq V(G_2)$), otherwise ($v\in V(G_1)$ and $u\in V(G_2)$) every vertex of $G_1$ distinct from $v$ is adjacent to $v$ implying that $v$ is universal in $G_1$, thus $\gamma_{\{R2\}}(G_1)=2$ which is a contradiction. Thus, the restriction of $g$ to $V(G_1)$ is a Roman $\{2\}$-dominating function of $G_1$, implying $\gamma_{\{R2\}}(G_1)=3$, and thus $m=3$.
 
 \item When $V_1=\{u,v,w\}$ and $u,v,w\in V(G_i)$ for $i=1$ or $i=2$, then again the restriction of $g$ to $V(G_i)$ is a Roman $\{2\}$-dominating function of $G_i$, implying $\gamma_{\{R2\}}(G_i)=3$ and thus $m=3$.
 
 \item When $V_1=\{u,v,w\}$ (w.l.o.g. $u,v \in V(G_1)$ and $w\in V(G_2)$), since every vertex in $G_1$ is adjacent to $w$, then every vertex in $G_1$ must be adjacent also to either $u$ or $v$. Then $\{u,v\}$ is a dominating set in $G_1$, concluding that $\gamma(G_1)= 2$ ($\gamma(G_1)$ cannot be equal to 1 since in this case $G_1$ would have a universal vertex implying $\gamma_{\{R2\}}(G_1)=2$, a contradiction).
 \end{itemize}
 
For the converse, assume first that $m=3$ (w.l.o.g. $\gamma_{\{R2\}}(G_1)=3$). Then a minimum $\gamma_{\{R2\}}(G_1)$-function is a Roman $\{2\}$-dominating function of $G_1\wedge G_2$ implying $\gamma_{\{R2\}}(G_1\wedge G_2)\leq 3$.

Now assume $m>3$. If $\gamma(G_1)=2$ ($\gamma(G_2)=2$), then $\{u,v\}$ is a dominating set for $G_1$ ($G_2$). Take any vertex $w\in V(G_2)$ ($V(G_1)$). Note that every vertex in $V(G_1)$ is adjacent in $G_1\wedge G_2$ to either $u$ or $v$, and also adjacent to $w$. In addition, every vertex in $V(G_2)$ is adjacent to both, $u$ and $v$. Then the function that assigns 1 to $u$, $v$ and $w$, and assigns 0 to every other vertex in $V(G_1\wedge G_2)$ is a Roman $\{2\}$-dominating function of $G_1\wedge G_2$ with weight 3, implying $\gamma_{\{R2\}}(G_1\wedge G_2)\leq 3$.
  
But $\gamma_{\{R2\}}(G_1\wedge G_2) >2 $ otherwise $G_1\wedge G_2$, and also $G_i$ for $i=1,2$, would have a universal vertex implying $\gamma_{\{R2\}}(G_1)=\gamma_{\{R2\}}(G_2)=2 <m$, a contradiction.  
  \item $\gamma_{\{R2\}}(G_1\wedge G_2)\geq 4$ follows from the previous items. Take $u \in V(G_1)$ and $v \in V(G_2)$ and define the function $f=(V_0=\emptyset,V_1=\emptyset,V_2=\{u,v\})$. It turns out that $f$ is a Roman $\{2\}$-dominating function of $G_1\wedge G_2$ since every vertex in $V(G_1)$ is adjacent to $v$ and every vertex in $V(G_2)$ is adjacent to $u$. Since $f$ has weight 4, we arrive at $\gamma_{\{R2\}}(G_1\wedge G_2)=4$. 
\end{enumerate}
\end{proof}

By recalling from the introduction that a non connected graph is a $\{2\}$-Roman graph if and only if every connected component of it is a $\{2\}$-Roman graph, for a graph that is the union of two other graphs we  have:

\begin{theorem}
    Let $G_1$ and $G_2$ be two graphs. Then $G_1 \cup G_2$ is a $\{2\}$-Roman graph if and only if $G_1$ and $G_2$ are both $\{2\}$-Roman graphs. 
    \end{theorem}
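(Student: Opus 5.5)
The plan is to prove the statement directly from additivity of both parameters over disjoint unions, together with the general inequality $\gamma_{\{R2\}}(G)\leq 2\gamma(G)$ applied to each part.

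\textbf{Additivity.} As observed at the start of this section, $\gamma_{\{R2\}}(G_1\cup G_2)=\gamma_{\{R2\}}(G_1)+\gamma_{\{R2\}}(G_2)$. The analogous identity $\gamma(G_1\cup G_2)=\gamma(G_1)+\gamma(G_2)$ holds for the same reason. There are no edges between $V(G_1)$ and $V(G_2)$, so a set $D$ dominates $G_1\cup G_2$ if and only if $D\cap V(G_i)$ dominates $G_i$ for $i=1,2$. Likewise, a function is a Roman $\{2\}$-dominating function of the union if and only if its restrictions to each $V(G_i)$ are Roman $\{2\}$-dominating functions of $G_i$. Minimizing separately on each part gives both identities.

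\textbf{The equivalence.} By the identities above,
\[
2\gamma(G_1\cup G_2)-\gamma_{\{R2\}}(G_1\cup G_2)=\bigl(2\gamma(G_1)-\gamma_{\{R2\}}(G_1)\bigr)+\bigl(2\gamma(G_2)-\gamma_{\{R2\}}(G_2)\bigr).
\]
Each bracket on the right is nonnegative, since every graph satisfies $\gamma_{\{R2\}}(G)\leq 2\gamma(G)$. A sum of two nonnegative integers is zero exactly when both are zero. Hence $G_1\cup G_2$ is $\{2\}$-Roman if and only if both $G_1$ and $G_2$ are.

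Equivalently, one can invoke the remark in the introduction that a graph is $\{2\}$-Roman if and only if each of its connected components is. The components of $G_1\cup G_2$ are exactly those of $G_1$ together with those of $G_2$, so the statement follows at once. The argument above makes that remark explicit.

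\textbf{Main obstacle.} There is no real obstacle. The only point needing care is the nonnegativity of each summand, which is exactly the general bound $\gamma_{\{R2\}}\leq 2\gamma$ quoted in the introduction. That bound is what rules out one part having slack that is "compensated" by the other part.
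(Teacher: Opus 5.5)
Your proof is correct and takes essentially the paper's route. The paper simply invokes the introductory remark that a graph is $\{2\}$-Roman if and only if each of its connected components is; your argument (additivity of $\gamma$ and $\gamma_{\{R2\}}$ over the vertex-disjoint union, plus nonnegativity of $2\gamma-\gamma_{\{R2\}}$ on each part) is exactly the justification of that remark, which the paper leaves as ``not hard to notice.''
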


For the join of two other graphs we can state:

\begin{theorem}\label{th:2rjoin}
    Let $G_1$ and $G_2$ be two  graphs. Then $G_1\wedge G_2$ is a $\{2\}$-Roman graph if and only if one of the following conditions holds:
    \begin{enumerate}
        \item $G_1$ or $G_2$ has a universal vertex;
        \item $G_1$ and $G_2$ are non connected with $|V(G_i)|\geq 3$ and $\gamma(G_i)\geq 3$ for $i=1,2$;
         \item $G_1$ ($G_2$) is non decomposable and  $G_2$ ($G_1$) is  non connected with
        $\gamma(G_i)\geq 3$ and $\gamma_{\{R2\}}(G_i)\geq 4$ for $i=1,2$;
        \item $G_1$ and $G_2$ are both non decomposable with
        $\gamma(G_i)\geq 3$ and $\gamma_{\{R2\}}(G_i)\geq 4$ for $i=1,2$.

    \end{enumerate} 
\end{theorem}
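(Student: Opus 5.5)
The plan is to compute $\gamma(G_1\wedge G_2)$ directly and then compare $2\gamma(G_1\wedge G_2)$ with the value of $\gamma_{\{R2\}}(G_1\wedge G_2)$ given by Theorem~\ref{join}.

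First, $\gamma(G_1\wedge G_2)=1$ if and only if $G_1$ or $G_2$ has a universal vertex. This covers the case where some $|V(G_i)|=1$, since a single vertex is universal in $G_i$. Otherwise $\gamma(G_1\wedge G_2)=2$: any $u\in V(G_1)$ together with any $v\in V(G_2)$ dominates the join. In the first case $\gamma_{\{R2\}}(G_1\wedge G_2)=2=2\gamma(G_1\wedge G_2)$, because the join has at least two vertices and a universal vertex. This gives condition~1. In the second case the join is $\{2\}$-Roman if and only if $\gamma_{\{R2\}}(G_1\wedge G_2)=4$. Both $G_i$ then have at least two vertices, so Theorem~\ref{join}, item~3, applies. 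Hence, when neither $G_i$ has a universal vertex, $G_1\wedge G_2$ is $\{2\}$-Roman if and only if
\[
\gamma(G_i)\geq 3 \quad\text{and}\quad \gamma_{\{R2\}}(G_i)\geq 4 \qquad\text{for } i=1,2 .
\]

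The remaining work is to sort this condition according to the structure of each $G_i$, which yields conditions 2--4. If $G_i$ were itself a join $A\wedge B$, then one vertex from $A$ and one from $B$ would dominate it, so $\gamma(G_i)\leq 2$. Hence $\gamma(G_i)\geq 3$ forces $\overline{G_i}$ to be connected. Therefore each $G_i$ is either non connected or non decomposable, which gives exactly the three combinations listed. For the non connected case, $\gamma(G_i)\geq 3$ already forces $|V(G_i)|\geq 3$. Conversely, each of conditions 2--4 excludes universal vertices, since $\gamma(G_i)\geq 3$, and so each returns us to the displayed condition.

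The step I expect to be the obstacle is condition~2, which as stated omits the requirement $\gamma_{\{R2\}}(G_i)\geq 4$. I would try to derive it from non-connectedness together with $\gamma(G_i)\geq 3$, using additivity of $\gamma_{\{R2\}}$ over components and the inequality $\gamma_{\{R2\}}(C)\geq\gamma(C)$ for each component $C$. This derivation fails for $3K_1$, which has $\gamma=3$ but $\gamma_{\{R2\}}=3$. Indeed, $3K_1\wedge 3K_1=K_{3,3}$ has $\gamma=2$ and $\gamma_{\{R2\}}=3$: put weight $1$ on each vertex of one side. So $K_{3,3}$ is not $\{2\}$-Roman even though it meets condition~2 as literally stated. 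I would therefore state and prove condition~2 with the extra hypothesis $\gamma_{\{R2\}}(G_i)\geq 4$, as in conditions 3 and 4. Under that reading the equivalence above goes through verbatim.
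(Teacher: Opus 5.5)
Your argument is correct and follows the same route as the paper. Theorem~\ref{join} reduces the case with no universal vertex to the condition $\gamma(G_i)\geq 3$ and $\gamma_{\{R2\}}(G_i)\geq 4$ for $i=1,2$. The bound $\gamma(G_i)\geq 3$ then rules out $G_i$ being a join, which leaves only the non connected and non decomposable combinations.

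Your objection to condition~2 is also right. Both $G_1=G_2=3K_1$ and $G_1=G_2=K_1\cup C_4$ satisfy condition~2 as written, yet each join has $\gamma=2$ and $\gamma_{\{R2\}}=3$, so neither is $\{2\}$-Roman. The paper's proof breaks exactly at the sentence asserting that two non connected graphs with $\gamma(G_i)\geq 3$ give $\gamma_{\{R2\}}(G_1\wedge G_2)=4$. That sentence overlooks the case $m=3$ in item~2 of Theorem~\ref{join}.

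So condition~2 needs the extra hypothesis $\gamma_{\{R2\}}(G_i)\geq 4$, as you propose; this also makes $|V(G_i)|\geq 3$ redundant. With that change your proof establishes the corrected equivalence.
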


\begin{proof}

By taking into account Theorem \ref{join}, the join $G_1\wedge G_2$ is a $\{2\}$-Roman graph if and only if:
\begin{itemize}
\item[Case 1:] $\gamma(G_1\wedge G_2)=1$ and $\gamma_{\{R2\}}(G_1\wedge G_2)=2$, or
\item [Case 2:] $\gamma(G_1\wedge G_2)=2$ and $\gamma_{\{R2\}}(G_1\wedge G_2)=4$.
\end{itemize}

    Case 1 occurs if and only if any  $G_1$ or $G_2$ has a universal vertex. 
    
    Then suppose that $G_1$ nor $G_2$ has a universal vertex. Then clearly $\gamma(G_1\wedge G_2)=2$ and, for  
    Case 2 to happen,   we. must have $\gamma_{\{R2\}}(G_i)\geq 4$ and $\gamma(G_i)>2$ for $i=1,2$.

    Let us assume --- w.l.o.g.--- that $G_1$ is connected and $\overline{G_1}$ is non connected, then $G_1$ is the join of two other graphs with no universal vertex.   
    It follows that $\gamma(G_1)=2$, thus we are not in Case 2.

    Now, let us consider that $G_1$ and $ G_2$ are non connected graphs, then $\gamma(G_1)\geq 2$ and $\gamma(G_2)\geq 2$. If $\gamma(G_i)=2$ for some $i=1,2$, $\gamma(G_1\wedge G_2)=2$ while $\gamma_{\{R2\}}(G_1\wedge G_2)=3$. Then $G_1\wedge G_2$ is not a $\{2\}$-Roman graph. Now let $\gamma(G_1)\geq 3$ and $\gamma(G_2)\geq 3$. Then $\gamma(G_1\wedge G_2)=2$ and $\gamma_{\{R2\}}(G_1\wedge G_2)=4$. Therefore $G_1\wedge G_2$ is a $\{2\}$-Roman graph.

    The remaining cases to be analyzed for which we are also in Case 2 are:  $G_1$ ($G_2$) is non decomposable, and $G_2$ ($G_1$) is non decomposable or non connected.
    \end{proof}

\begin{corollary}
    Let $G_1$ and $G_2$ be two graphs, then it is  polynomial time recognizable if $G_1\wedge G_2$ is $\{2\}$-Roman. 
\end{corollary}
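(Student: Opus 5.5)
The plan is to read off the recognition algorithm directly from Theorem~\ref{th:2rjoin}. It reduces the $\{2\}$-Roman property of $G_1\wedge G_2$ to a few tests on $G_1$ and $G_2$, and we will check that each test runs in polynomial time. There is one subtlety: the quantities $\gamma(G_i)$ and $\gamma_{\{R2\}}(G_i)$ appear in the statement, and computing them is NP-hard in general. However, they are only ever compared with the constants $3$ and $4$. Such threshold tests can be done by brute force over vertex subsets of bounded size.

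Concretely, the algorithm proceeds as follows.

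First, decide whether $G_1$ or $G_2$ has a universal vertex, by inspecting degrees in $O(|V|^2)$ time. If one does, answer yes by item 1.

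Second, decide connectivity of $G_i$ and of $\overline{G_i}$ by BFS on the graph and on its complement. This classifies each $G_i$ as non connected, join-decomposable (connected with $\overline{G_i}$ non connected), or non decomposable. It also checks $|V(G_i)|\geq 3$.

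Third, test $\gamma(G_i)\geq 3$. This holds if and only if no set of at most two vertices dominates $G_i$, which is checked in $O(n^2)$ candidate pairs, each verified in $O(n)$ time.

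Fourth, test $\gamma_{\{R2\}}(G_i)\geq 4$. This holds if and only if no Roman $\{2\}$-dominating function of weight at most $3$ exists. Any such function has support of size at most $3$ with values in $\{1,2\}$, so there are only $O(n^3)$ candidates, each checked in $O(n^2)$ time.

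With these four tests, evaluating the disjunction of items 1--4 of Theorem~\ref{th:2rjoin} takes polynomial time, roughly $O(n^5)$. The join-decomposable case needs no separate handling: if neither factor has a universal vertex and some $G_i$ is join-decomposable, then none of items 2--4 can apply, and the answer is no, which is exactly what the theorem gives.

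The only real obstacle is the point noted at the start. The characterization mentions domination-type parameters, and one must observe that the theorem only needs them bounded below by small constants, so no exact computation is required. Everything else is routine bookkeeping of the case split in Theorem~\ref{th:2rjoin}.

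Alternatively, one could skip the factors entirely. By Theorem~\ref{join}, $\gamma_{\{R2\}}(G_1\wedge G_2)\leq 4$ and $\gamma(G_1\wedge G_2)\leq 2$. So one can directly compute both parameters of the join by the same bounded enumeration and compare $\gamma_{\{R2\}}$ with $2\gamma$. I would mention this as a one-line sanity check of the corollary.
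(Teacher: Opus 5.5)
Your overall strategy is what the paper intends: the corollary is stated without proof, as an immediate consequence of Theorem~\ref{th:2rjoin}. Your observation that $\gamma(G_i)$ and $\gamma_{\{R2\}}(G_i)$ only need to be compared with the constants $3$ and $4$, and so can be tested by enumerating supports of size at most $3$, supplies the detail the paper leaves implicit.

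\textbf{The gap.} Your main algorithm evaluates items 1--4 of Theorem~\ref{th:2rjoin} literally, and item 2 of that theorem is false, because it omits the condition $\gamma_{\{R2\}}(G_i)\geq 4$. Take $G_1=G_2=\overline{K_3}$. Both are non connected with $|V(G_i)|=3$ and $\gamma(G_i)=3$, so your procedure answers yes for $G_1\wedge G_2=K_{3,3}$. But giving value $1$ to the three vertices of one side is a Roman $\{2\}$-dominating function of weight $3$. Hence $\gamma_{\{R2\}}(K_{3,3})=3\neq 4=2\gamma(K_{3,3})$, which is exactly item 2 of Theorem~\ref{join} with $m=3$. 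The paper's proof of that item asserts that $\gamma(G_1),\gamma(G_2)\geq 3$ forces $\gamma_{\{R2\}}(G_1\wedge G_2)=4$. This fails whenever a disconnected factor has $\gamma_{\{R2\}}=3$, e.g.\ $\overline{K_3}$ or $K_1\cup C_4$. As written, your procedure runs in polynomial time but does not decide the $\{2\}$-Roman property.

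\textbf{The repair} uses only tools you already have.
\begin{enumerate}
\item Theorem~\ref{join}, whose proof is sound, gives the correct criterion directly: $G_1\wedge G_2$ is $\{2\}$-Roman if and only if some $G_i$ has a universal vertex, or $\gamma(G_i)\geq 3$ and $\gamma_{\{R2\}}(G_i)\geq 4$ for both $i=1,2$. Your tests 1, 3 and 4 decide exactly this. The decomposability split (test 2) is superfluous, since $\gamma(G_i)\geq 3$ already rules out join-decomposable factors.
\item Better still, promote your closing ``sanity check'' to the actual proof. Every join satisfies $\gamma(G_1\wedge G_2)\leq 2$ and $\gamma_{\{R2\}}(G_1\wedge G_2)\leq 4$ (value $2$ on one vertex from each side). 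So both parameters of the join are computed exactly in polynomial time by enumerating vertex sets of size at most $2$ and functions of weight at most $3$, and one then compares $\gamma_{\{R2\}}$ with $2\gamma$.
\end{enumerate}
The second argument does not rely on Theorem~\ref{th:2rjoin} at all. It also shows more generally that the $\{2\}$-Roman property is polynomially decidable on any graph class of bounded domination number.
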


\section{Roman $\{2\}$-domination on non decomposable graphs with ``few'' 4-paths}\label{sec:special}

In this section we first obtain closed formulas for the Roman $\{2\}$-domination number of the non decomposable  graphs among partner-limited graphs.  

Let $G$ be a graph, and let $A$ be an induced $P_4$ in $G$. 
A \emph{partner} of $A$ is a vertex $v$ in $G\setminus A$ such that $A\cup \{u\}$ induces a graph with at least two induced 4-paths. 
$G$ is a \emph{partner-limited} graph (or $PL$-graph) if any induced $P_4$ in $G$ has at most two partners. From \cite{roussel1999}, a graph $G$ is partner-limited if and only if exactly one of the following conditions
holds: $G$ or $\overline{G}$ is not connected and each one of its connected components is a partner-limited graph, $G$ is a well labelled spider, $G$ is isomorphic to a graph in $ZOO$, $G$ is a clique or a stable graph or $G$ is a split $\{H_1,H_2,\overline{H_1},\overline{H_2}\}$-free graph, where $H_1$ and $H_2$ are shown in Figure \ref{Hs}.

\subsection{Spiders}

A graph $G$ is a \emph{spider} if its vertex set can be partitioned into $(S,C,H)$ such that every vertex in $H$ is adjacent to every vertex in $C$ and to no vertex in $S$, $C$ is a clique and $S$ is a stable set with $S=\{s_1,\ldots,s_r\}$, $C=\{c_1,\ldots,c_r\}$ and $r\geq 2$, and such that only one of the following conditions hold:  $s_i$ is adjacent to $c_j$ if and only if $i=j$ (in which case $G$ is called a \emph{thin spider} and an edge $s_ic_i$ a \emph{leg}), or $s_i$ is adjacent to $c_j$ if and only if $i\neq j$ (in which case $G$ is called a \emph{thick spider}). It is straightforward that the complement graph of a thin spider is a thick spider and vice-versa. $(S,C,H)$ is called the \emph{(spider) partition} of the spider and can be found in linear time \cite{Jamison1992}.

\begin{proposition}\label{spider}
Let $G$ be a spider, then
$\gamma_{\{R2\}}(G)= 3$ if $G$ is thick, and $\gamma_{\{R2\}}(G)=|S|+1$ if $G$ is thin.
\end{proposition}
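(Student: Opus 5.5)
The plan is to prove each value by a matching lower bound and an explicit construction, using only the definition and the remark from the introduction that $\gamma_{\{R2\}}(G)=2$ exactly when $G$ has a universal vertex or two non-adjacent vertices that together are adjacent to every other vertex. Throughout, $(S,C,H)$ is the spider partition with $r=|S|=|C|\geq 2$, and $H$ may be empty.

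\emph{Thick spider.} For the upper bound, define $f(c_1)=2$, $f(s_1)=1$ and $f=0$ elsewhere. Every $s_j$ with $j\neq 1$ is adjacent to $c_1$. Every other vertex of $C$ and every vertex of $H$ is adjacent to $c_1$. The vertex $s_1$ has positive value. So $f$ is a Roman $\{2\}$-dominating function of weight $3$.

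For the lower bound, $\gamma_{\{R2\}}(G)\geq 2$ because $|V|\geq 2$. To rule out the value $2$, we use the characterization above.
\begin{itemize}
\item[(a)] There is no universal vertex: $c_i$ misses $s_i$, each $s_i$ misses the other vertices of $S$ (and $r\geq 2$), and each vertex of $H$ misses all of $S$.
\item[(b)] Suppose $u,v$ are non-adjacent and every other vertex is adjacent to both. If $u=s_i$, then $c_i$ is not adjacent to $u$, so $c_i=v$. But some $s_j$ with $j\neq i$ exists and is not adjacent to $s_i$, a contradiction. The case $u=c_i$ is symmetric, since it forces $v=s_i$. If $u\in H$, then every vertex of $S$ would have to equal $v$, which is impossible since $|S|\geq 2$.
\end{itemize}
Hence $\gamma_{\{R2\}}(G)=3$.

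\emph{Thin spider.} For the upper bound, set $f(c_1)=2$, $f(s_j)=1$ for $j\geq 2$, and $f=0$ elsewhere; this has weight $r+1$. The vertex $s_1$ is adjacent to $c_1$. All vertices of $C\setminus\{c_1\}$ and of $H$ are adjacent to $c_1$. Every other vertex has positive value. So $f$ is a Roman $\{2\}$-dominating function of weight $r+1$.

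For the lower bound, take any Roman $\{2\}$-dominating function $f$ and look at the legs. Since $N(s_i)=\{c_i\}$, either $f(s_i)\geq 1$ or $f(c_i)=2$. Thus each leg satisfies $f(s_i)+f(c_i)\geq 1$, and the legs are disjoint, so $f(V)\geq r$.

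Now suppose $f(V)=r$. Then $f\equiv 0$ on $H$ and each leg has weight exactly $1$. The option $f(s_i)=0$ would force $f(c_i)=2$, giving the leg weight $2$, so $f(s_i)=1$ and $f(c_i)=0$ for every $i$. But then $c_1$ has value $0$ and its neighbours carry total weight only $f(s_1)=1$: the other $c_j$ and the vertices of $H$ are all $0$, and $c_1$ is not adjacent to any $s_j$ with $j\neq 1$. This contradicts $f$ being Roman $\{2\}$-dominating. Hence $\gamma_{\{R2\}}(G)=|S|+1$.

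The main point needing care is the exhaustive exclusion of weight $2$ for thick spiders. This is routine once we use the stated characterization of $\gamma_{\{R2\}}=2$, together with $r\geq 2$ to guarantee the existence of a second vertex of $S$.
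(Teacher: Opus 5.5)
Your proof is correct and follows essentially the paper's route: the same explicit functions of weight $3$ and $|S|+1$, ruling out weight $2$ for thick spiders (no universal vertex and no suitable pair of $1$-valued vertices), and for thin spiders the disjoint-leg counting argument that the paper packages as Proposition~\ref{stem}, which you carry out inline. Your case analysis for the pair of $1$-valued vertices is more complete than the paper's, which asserts without justification that both must lie in $C$.
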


\begin{proof} 
Let $G$ be a spider with spider partition $(S,C,H)$ and $S=\{s_1,\ldots,s_r\}$, $C=\{c_1,\ldots,c_r\}$ and $r\geq 2$.

If $G$ is thick, we define on $V$ the function $f=(V\setminus\{c_j,s_j,\}, \{s_j\}, \{c_j\})$, where $j$ is any index in $[1,r]$. Since $c_j$ is adjacent to every vertex in $H\cup C$ and also to every vertex in $S$ but $s_j$, then $f$ is a Roman \{2\}-dominating function of $G$ with weight 3,  thus $\gamma_{\{R2\}}(G)\leq 3$. To prove the other inequality, let us assume that there exists a Roman \{2\}-dominating function $g$ of $G$ with weight 2. Thus $g=(V\setminus \{v\},\emptyset,{v})$ or $g=(V\setminus \{u,v\},\{u,v\},\emptyset )$ for some distinct vertices $u$ and $v$. In the first case, every vertex distinct from $u$ and $v$ in $G$ must be adjacent to $v$, which is not possible since $G$ has no universal vertex. In the second case, every other vertex must be adjacent to both $u$ and $v$. The only possibility for $u$ and $v$ is that they both belong to $C$, therefore $u=c_i$ and $v=c_j$ for some $i,j\in [1, r]$. But $s_i$ is adjacent to only one vertex in $V_1$, this is $c_j$ (the same happens to $s_j$), contradicting the fact that $g$ is a Roman \{2\}-dominating function of $G$. Thus, $\gamma_{\{R2\}}(G) \geq 3$.

If $G$ is thin, note that $G$ can be obtained by adding a pendant vertex to every vertex in $S$. Since every vertex in $H$ is adjacent to every vertex in $S$, we are under the hypothesis of Proposition \ref{stem} and $\gamma_{\{R2\}}(G)\geq |S|+1$. Then we define on $G$ the function $f=( H\cup (C\setminus \{c_j\})\cup \{s_j\},S\setminus \{s_j\},\{c_j\} )$ which is clearly a Roman \{2\}-dominating function of $G$, where again $j$ is any index in $[1,r]$. Since $f(G)=|S|+1$, we have $\gamma_{\{R2\}}(G)= |S|+1$. 
\end{proof}

\subsection{Well labelled spiders}

Although it is not the original definition given in \cite{roussel1999}, we consider the one given in \cite{argiroffo2014}.
  A graph $W$ is a \emph{well labelled spider} if and only if $W$ is obtained from a thin spider $(S,C,H)$ by performing one of the operations R1, R2, R3 once, where:
  
\begin{itemize}
\item R1: replace one vertex $v\in S$ by a graph with three vertices; 
\item R2: replace both endpoints of one leg of $(S,C,H)$ by graphs with two vertices;
\item R3: replace at most one endpoint of every leg of $(S,C,H)$ by a graph with two vertices. 
\end{itemize}

Next, we consider the addition of pendant vertices to a given graph. When we add a pendant (new) vertex $w$ to a vertex $v$ of a graph $G$,  we mean that we build a graph $H^v$ with vertex set $V(G)\cup \{w\}$ and edge set $E(H)=E(G) \cup \{vw\}$. We can prove:

\begin{proposition}\label{stem}
   Given a graph $G=(V,E)$, let $S$ be a subset of $V$ and consider the graph $H^S$ obtained from $G$ by adding a pendant vertex to each vertex of $S$. Then $\gamma_{\{R2\}}(G)\geq |S|+1$.
\end{proposition}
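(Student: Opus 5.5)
We read the statement as intended: the bound is on $\gamma_{\{R2\}}(H^S)$, and $S\neq\emptyset$. If $S=\emptyset$, the bound $\gamma_{\{R2\}}\geq 1$ holds trivially for any nonempty graph. The plan is a direct weight-counting argument. We fix an arbitrary Roman $\{2\}$-dominating function $f$ of $H^S$. We split the weight of $f$ into the contributions of the pairs $\{v,w_v\}$, for $v\in S$, where $w_v$ is the pendant vertex attached to $v$. These pairs are pairwise disjoint, so $f(V(H^S))\geq \sum_{v\in S}\big(f(v)+f(w_v)\big)$ plus the weight on all remaining vertices.

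\textbf{Step 1 (each pair contributes at least 1).} Fix $v\in S$. The only neighbour of $w_v$ is $v$. If $f(w_v)=0$, then $w_v$ cannot have two neighbours valued $1$, so $f(v)=2$. Hence $f(v)+f(w_v)\geq 1$ for every $v\in S$, and moreover $f(v)+f(w_v)\geq 2$ whenever $f(w_v)=0$. Summing over $S$ gives $f(V(H^S))\geq |S|$.

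\textbf{Step 2 (the extra unit).} If some pair has $f(v)+f(w_v)\geq 2$, we are done. Otherwise every pair has sum exactly $1$. By Step 1 this forces $f(w_v)=1$ and $f(v)=0$ for all $v\in S$. We then take any $v\in S$; since $S\neq\emptyset$ such a vertex exists. As $f(v)=0$, $v$ needs either a neighbour valued $2$ or two neighbours valued $1$. The neighbour $w_v$ supplies only a single $1$, so $v$ must have another neighbour $x$ with $f(x)\geq 1$.

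This $x$ cannot be a pendant $w_u$ with $u\neq v$, since $w_u$ is adjacent only to $u$. It cannot be a vertex of $S$ either, since all of those have value $0$. Therefore $x\in V\setminus S$, and $x$ lies outside all the pairs, so it contributes at least $1$ beyond the $|S|$ already counted. This gives $f(V(H^S))\geq |S|+1$ in every case. Since $f$ was arbitrary, $\gamma_{\{R2\}}(H^S)\geq |S|+1$.

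The only delicate point is the case analysis in Step 2: showing that the extra positive vertex must lie outside $S\cup\{w_u : u\in S\}$. It relies on the pendant vertices being adjacent only to their own stem. No extra hypothesis on $G$ is needed; in particular, the condition used in Proposition \ref{spider}, that $H$ is adjacent to every vertex of $S$, plays no role here.
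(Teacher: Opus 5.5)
Your proof is correct and follows essentially the same route as the paper. Both get a lower bound of $|S|$ from the pairwise disjoint closed neighbourhoods $\{v,w_v\}$ of the pendant vertices, and both then observe that tightness forces every pendant to $1$ and every stem to $0$, so each stem is underdominated unless extra weight lies in $V\setminus S$. The paper phrases that last step as a contradiction to $f(H^S)=|S|$, and like you it actually proves the bound for $\gamma_{\{R2\}}(H^S)$, as you correctly read the statement.
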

\begin{proof}
  Let us call, respectively, $v_i$ and $u_i$, the pendant vertex in $H^S$ and its neighbor for $i\in [1,|S|]$, and let $f$ be a $\gamma_{\{R2\}}(H^S)$-function. From the definition, $f(N[w])\geq 1$ for every $w\in V(H^S)$, in particular $f(N[v_i])\geq 1$ for every $i\in [1,|S|]$. Since $v_i$'s closed neighbourhood are pairwise disjoint, $f(H^S)\geq \sum\limits_{i=1}^{|S|} f(N[v_i]) \geq |S|$.
  Now assume $\gamma_{\{R2\}}(H^S)=|S|$. Then $f(v_i)=1$ and $f(u_i)=0$ and thus $f(N(u_i))=1$ for every $i \in [ 1, |S|]$, which contradicts that $f$ is a Roman $\{2\}$-dominating  function of $H^S$. Therefore, $\gamma_{\{R2\}}(G)\geq |S|+1$.
\end{proof}

\medskip

Finally, we consider the graph operation that, for a given vertex $v\in V$, adds to $G$ a new vertex $\tilde{v}$ in such a way that $v$ and $\tilde{v}$ are twins in the resulting graph $G\sim v$.
We say that $v$ and $\tilde{v}$ are \emph{twins} in $G$ if $N_G(v) \setminus \{\tilde{v}\}= N_G(\tilde{v})\setminus \{{v}\}$. Besides, when twins $v$ and $\tilde{v}$ are adjacent in $G$, we say that they are \emph{true twins} and when they are not, we say that they are \emph{false twins}. We can prove:
 
 \begin{proposition}\label{twins}
  Let $G$ be a graph, and $v \in V(G)$, then $\gamma_{\{R2\}}(G) \leq \gamma_{\{R2\}}(G\sim v)\leq \gamma_{\{R2\}}(G)+1$. Moreover, if there exists a $\gamma_{\{R2\}}(G)$-function
  $f$ such that $f(v)\neq 1$, then $\gamma_{\{R2\}}(G\sim v)=\gamma_{\{R2\}}(G)$.
\end{proposition}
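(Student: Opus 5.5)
We prove the two inequalities separately and then the refinement. Throughout, $\tilde v$ denotes the new twin of $v$ in $G\sim v$, so $N_{G\sim v}(\tilde v)\setminus\{v\}=N_G(v)$, and $G$ is the subgraph of $G\sim v$ induced by $V(G)$.

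For the upper bound, let $f$ be a $\gamma_{\{R2\}}(G)$-function and extend it to $G\sim v$. If $f(v)=2$, set $f(\tilde v)=0$: the vertex $\tilde v$ is dominated because it is adjacent to every $u\in N_G(v)$. If $f(v)=0$, again set $f(\tilde v)=0$: $\tilde v$ sees exactly the labels that $v$ sees in $G$ (plus possibly $v$ itself, which has label $0$), so the Roman $\{2\}$ condition for $\tilde v$ is the same as for $v$. Every other vertex keeps the same labelled neighbourhood or gains one, so its condition still holds. In both cases the weight is unchanged, which already proves the second statement. If $f(v)=1$, set $f(\tilde v)=1$, which trivially gives a Roman $\{2\}$-dominating function of weight $\gamma_{\{R2\}}(G)+1$. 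Hence $\gamma_{\{R2\}}(G\sim v)\le\gamma_{\{R2\}}(G)+1$ in general, with equality to $\gamma_{\{R2\}}(G)$ whenever some minimum $f$ has $f(v)\neq 1$.

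For the lower bound, let $g$ be a $\gamma_{\{R2\}}(G\sim v)$-function. We fold $\tilde v$ into $v$ by defining $h$ on $V(G)$ by $h(u)=g(u)$ for $u\neq v$ and $h(v)=\min\{2,g(v)+g(\tilde v)\}$. Then $h(V(G))\le g(V(G\sim v))$. It remains to check that $h$ is Roman $\{2\}$-dominating on $G$.

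Take any $u\neq v$ with $h(u)=0$. Its $G$-neighbourhood is its $(G\sim v)$-neighbourhood with $\tilde v$ removed. If $\tilde v$ is a neighbour of $u$ in $G\sim v$, then so is $v$, since the two are twins and $u\neq v$. The positive $g$-mass that $u$ received from $\{v,\tilde v\}$ is now concentrated on $v$, capped at $2$. Checking the cases (a $2$ on $v$ or $\tilde v$, or two $1$'s on $v$ and $\tilde v$, or a single $1$ among them plus another $1$ elsewhere) shows that $u$ is still Roman $\{2\}$-dominated under $h$.

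Finally take $u=v$ with $h(v)=0$. Then $g(v)=g(\tilde v)=0$, so $v$ was dominated in $G\sim v$ by neighbours other than $\tilde v$. Those neighbours lie in $G$ and keep their labels under $h$, so $v$ is still dominated.

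The main obstacle is this folding step: the key observation is that adjacency to $\tilde v$ forces adjacency to $v$, and this must be handled uniformly whether the twins are true or false.
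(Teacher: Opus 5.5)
The gap is in your case $f(v)=2$, where you set $f(\tilde v)=0$ and claim $\tilde v$ is dominated ``because it is adjacent to every $u\in N_G(v)$''. Being adjacent to the neighbours of $v$ says nothing about their labels, which may all be $0$.

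For true twins your extension does work, but only because $\tilde v$ is adjacent to $v$ itself, which carries the label $2$. For false twins it fails, and the operation $G\sim v$ allows false twins, as you note yourself. A concrete failure: take $G=K_2$ on $\{v,w\}$ with the $\gamma_{\{R2\}}(G)$-function $f(v)=2$, $f(w)=0$. Adding a false twin of $v$ gives the path $v\,w\,\tilde v$. Your extension gives $\tilde v$ the label $0$, and its only neighbour $w$ also has label $0$, so $\tilde v$ is not dominated.

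This breaks your proof of the ``moreover'' part. Your argument for the general bound $\gamma_{\{R2\}}(G\sim v)\le\gamma_{\{R2\}}(G)+1$ also passes through this case. There it is trivially repaired by always giving $\tilde v$ the label $1$, as the paper does.

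The fix for the ``moreover'' part is the one the paper uses: when $f(v)=2$, split the label over the twins, giving both $v$ and $\tilde v$ the label $1$.
\begin{itemize}
\item The weight is unchanged.
\item $v$ and $\tilde v$ now carry positive labels, so they impose no condition.
\item Any $w\neq v,\tilde v$ with label $0$ that relied on $v$ is adjacent to $v$, hence also to $\tilde v$, so it sees two $1$'s.
\item Vertices adjacent to neither $v$ nor $\tilde v$ are unaffected.
\end{itemize}
This works uniformly for true and false twins. In the example above it gives $v,\tilde v\mapsto 1$, $w\mapsto 0$, of weight $2=\gamma_{\{R2\}}(P_3)$.

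The rest of your proposal is correct. Your $f(v)=0$ and $f(v)=1$ cases are fine. Your lower-bound folding argument is also sound: capping $h(v)$ at $2$ replaces the paper's minimality argument that $g(v)+g(\tilde v)\le 2$, and it is equally valid.
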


\begin{proof} 
Let $h$ be a $\gamma_{\{R2\}}(G)$-function. The function  $g$ defined on $G\sim v$ by assigning $g(\tilde{v})=1$ and $g(u)=h(u)$ for every $u\in V(G)$ is  a Roman $\{2\}$-dominating function of $G\sim v$ with $g(G\sim v)=\gamma_{\{R2\}}(G)+1$, implying $\gamma_{\{R2\}}(G\sim v)\leq \gamma_{\{R2\}}(G)+1$.

For the lower bound, let $g$ be a $\gamma_{\{R2\}}(G\sim v)$-function. Note that $g(v)+g(\tilde{v}) \leq 2$, else $g$ is not minimum (the function $\overline{g}$ such that $\overline{g}(v)=\overline{g}(\tilde{v})=1$ and $\overline{g}(u)=g(u)$ for every other $u\in V(G)$ is a Roman $\{2\}$-dominating function of $G\sim v$ with $\overline{g}(G\sim v)<g(G\sim v)$). Then define $f$ over $V$ such that $f(v)=g(v)+g(\tilde{v})$ and $f(u)=g(u)$ for $u\in V$ and $u\neq v$. It is clear that $f$ is a Roman $\{2\}$-dominating function of $G$ with weight $\gamma_{\{R2\}}(G\sim v)$. Therefore, $\gamma_{\{R2\}}(G\sim v)\geq \gamma_{\{R2\}}(G)$.

Suppose now that there exists a $\gamma_{\{R2\}}(G)$-function $f$ such that $f(v)\neq 1$. If $f(v)=0$, since every neighbor of $v$ in $G$ is a neighbor of $\tilde{v}$, we can extend $f$ to a Roman $\{2\}$-dominating function of $G\sim v$ with weight $\gamma_{\{R2\}}(G)$.
If $f(v)=2$, we define a function $g$ by $g(v)=g(\tilde{v})=1$ and $g(u)=f(u)$ for every other $u\in V(G)$. Since every neighbour $w$ of $v$ with $g(w)=0$ is also a neighbour of $\tilde{v}$, $g(N(w))\geq 2$ and $g$ is a Roman $\{2\}$-dominating function of $G\sim v$ with $g(G)=\gamma_{\{R2\}}(G)$. Hence, $\gamma_{\{R2\}}(G\sim v)\leq \gamma_{\{R2\}}(G)$. 
\end{proof}

\begin{proposition}\label{prop:w12}
  Let $W$ be a well labelled spider obtained by performing operations R1 or R2 to a thin spider $G=(S,C,H)$. Then $\gamma_{\{R2\}}(W)= \gamma_{\{R2\}}(G)=|S|+1$.
\end{proposition}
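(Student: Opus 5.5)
We prove the two inequalities $\gamma_{\{R2\}}(W)\geq |S|+1$ and $\gamma_{\{R2\}}(W)\leq |S|+1$ separately. The second equality, $\gamma_{\{R2\}}(G)=|S|+1$, is already given by Proposition~\ref{spider}. Replacing a vertex by a graph with two or three vertices, each copy having the same neighbours outside the replaced set, amounts to repeatedly applying the twin operation $G\sim v$, possibly followed by adding or deleting edges among the copies. We therefore compare $W$ with $G$ through these twin additions.

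\textbf{Upper bound.} We exhibit a function of weight $|S|+1$ that survives the operations. Let $j$ be the index of the leg $s_jc_j$ that is modified: under R1 this is the leg of the replaced $s_j$, and under R2 it is the leg whose endpoints $s_j,c_j$ are replaced. Take the function from the proof of Proposition~\ref{spider},
\[f=\bigl(H\cup (C\setminus\{c_j\})\cup\{s_j\},\; S\setminus\{s_j\},\; \{c_j\}\bigr).\]
It gives $s_j$ the value $0$ and $c_j$ the value $2$, so $f(s_j)\neq 1$.
\begin{itemize}
\item Under R1, every copy of $s_j$ receives $0$ and is adjacent to $c_j$, which carries the value $2$. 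Every other vertex is dominated exactly as in $G$, since $c_j$ is still adjacent to all of $C\cup H$ and to all copies of $s_j$. So $f$ extends to $W$ with the same weight.
\item Under R2, we keep value $2$ on one copy $c_j'$ of $c_j$ and value $0$ on the other copies and on all copies of $s_j$. Every copy of $s_j$ is adjacent to $c_j'$. The other copies of $c_j$ are adjacent to $c_j'$, because they share its neighbourhood in $C$ and, as shown below, they are also adjacent to each other.
\end{itemize}
Equivalently, the upper bound follows from the ``moreover'' part of Proposition~\ref{twins}, since both $f(s_j)=0$ and $f(c_j)=2$ differ from $1$. However, that part of Proposition~\ref{twins} is stated for twins, and the replacement graph may carry arbitrary edges among the copies, so the direct verification above is safer.

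\textbf{Lower bound.} We use Proposition~\ref{stem} on the legs that remain untouched. There are two points to check.
\begin{itemize}
\item \emph{Adjacency among copies of $c_j$ under R2.} We must confirm that the copies of $c_j$ are pairwise adjacent. This holds because in a well labelled spider the set $C$ together with the copies must remain a clique for the resulting graph to be partner-limited. If this is not forced by the definition, the other copy of $c_j$ is instead dominated through its neighbours in $C\setminus\{c_j\}$, which carry value $0$; in that case we reassign values, for instance giving $1$ to both copies of $c_j$ and to $s_{j'}$, and then re-check.
\item \emph{Counting the lower bound.} The $r-1$ legs $s_ic_i$ with $i\neq j$ are untouched, so each $s_i$ is a pendant vertex with a private closed neighbourhood. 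This yields weight at least $r-1$. To reach $r+1=|S|+1$, we need two more units of weight, which must be located around the modified leg. Let $g$ be a $\gamma_{\{R2\}}(W)$-function. If $g$ gives weight $0$ to all copies of $s_j$, then each of them needs weight at least $2$ from its neighbours, namely the copies of $c_j$. If instead $g$ puts positive weight on some copy of $s_j$, we argue that a copy of $c_j$ (or of $s_j$) must carry further weight. This is done by a local exchange argument in the style of the proof of Proposition~\ref{stem}: if all $N[s_i]$ had weight exactly $1$ and the modified leg had weight only $2$, then some $c_i$ with value $0$ would see total weight at most $1$, a contradiction.
\end{itemize}

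The main obstacle is this lower-bound bookkeeping, because the copies of $s_j$ may be mutually adjacent (for example, a triangle under R1). First we try to contract $W$ back to $G$ by summing $g$ over the copies, as in the lower-bound part of Proposition~\ref{twins}. When the copies are true or false twins, this contraction directly gives $\gamma_{\{R2\}}(W)\geq \gamma_{\{R2\}}(G)$. For the non-twin configurations, such as a $P_3$ inside an R1 replacement, we fall back on the local counting argument above.
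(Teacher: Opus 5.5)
\textbf{The R2 upper bound fails.} You put $2$ on one copy $c_j'$ of $c_j$ and $0$ on the other copy $c_j''$. That only dominates $c_j''$ if $c_j'c_j''\in E(W)$.
\begin{itemize}
\item Sharing neighbourhoods in $C$ does not give this adjacency.
\item Your claim that partner-limitedness forces it is false. R2 replaces $c_j$ by an arbitrary two-vertex graph, so $\overline{K_2}$ is allowed, and the resulting graph is still partner limited. For instance, the $P_4$ $s_ic_ic_j's_j'$ has exactly the two partners $c_j''$ and $s_j''$.
\item In that case $c_j''$ has value $0$, and all its neighbours ($C\setminus\{c_j\}$, $H$ and the copies of $s_j$) also have value $0$. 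So your function is not Roman $\{2\}$-dominating.
\item The fallback you mention is never carried out.
\end{itemize}
The missing step is to split the $2$. Give value $1$ to both copies of $c_j$, value $1$ to every $s_i$ with $i\neq j$, and $0$ elsewhere. Every vertex of value $0$ (the copies of $s_j$, the $c_i$, the vertices of $H$) is adjacent to both copies of $c_j$, whether or not these are adjacent to each other. This gives weight $|S|+1$. It is exactly the case $f(v)=2$ of Proposition~\ref{twins}, which is how the paper argues.

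Your reason for avoiding that proposition is also unfounded. Every graph on at most three vertices arises from $K_1$ by successive twin additions; for example, $P_3$ is obtained by adding a true twin $\tilde v$ of $v$ and then a false twin of $v$. The value $0$ on $s_j$ is inherited by each new copy, so Proposition~\ref{twins} can be applied repeatedly and yields both bounds at once.

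\textbf{The lower bound is repairable but not correct as written.} In your second case, a function with every $N[s_i]$ of weight $1$ and the modified leg of weight $2$ is precisely an optimal function; each $c_i$ then sees weight $3$, so no contradiction arises. A clean argument that works for every edge pattern among the copies is as follows.
\begin{itemize}
\item Let $R$ be the set of all copies of $s_j$ and $c_j$. Every copy $s'$ of $s_j$ satisfies $N[s']\subseteq R$, and there are at least two such copies.
\item If $g(R)\leq 1$, some copy $s'$ would have $g(s')=0$ and $g(N(s'))\leq 1$, which is impossible. Hence $g(R)\geq 2$.
\item The $|S|-1$ sets $N[s_i]=\{s_i,c_i\}$, $i\neq j$, are pairwise disjoint and disjoint from $R$, and each has weight at least $1$.
\end{itemize}
Adding these gives $g(W)\geq |S|+1$.
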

\begin{proof}
On the one hand, notice that operation R1 is equivalent to performing on $G$ the composition of two additions of twins to a vertex $v\in S\cup C$. 
Then, from the proof of Proposition \ref{spider}, there exists a $\gamma_{\{R2\}}(G)$-function that assigns 0 to $v$. The result follows by applying Proposition \ref{twins} twice.

On the other hand, notice that performing operation R2 to $G$ is equivalent to adding a twin to both endpoints of a leg, let us say $cs$, of $G$. From the proof of Proposition $\ref{spider}$, there exists a $\gamma_{\{R2\}}(G)$-function $f=(V_0,V_1, V_2)$ of $G$ with $c\in V_2$ and $s\in V_0$. The result follows by applying Proposition \ref{twins}.\end{proof}

\begin{proposition}\label{prop:R3}
  Let $W$ be a well labelled spider  obtained by performing operation $R3$ on a thin spider $G=(S,C,H)$ onto $t\geq 1$ vertices in $S$. Then $\gamma_{\{R2\}}(W)= \gamma_{\{R2\}}(G)+t-1$.
\end{proposition}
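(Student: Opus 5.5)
The plan is to prove $\gamma_{\{R2\}}(W)=|S|+t$ directly, using $\gamma_{\{R2\}}(G)=|S|+1$ from Proposition~\ref{spider}. I will get the lower bound by a disjoint-neighbourhood counting argument in the spirit of Proposition~\ref{stem}, and the upper bound from an explicit function. Write $S=\{s_1,\ldots,s_r\}$ and $C=\{c_1,\ldots,c_r\}$ as in the definition of a thin spider. Let $I\subseteq[1,r]$, $|I|=t$, index the legs whose $S$-endpoint $s_i$ is replaced by a two-vertex graph $\{s_i,s_i'\}$. By the definition of R3, for $i\in I$ the vertex $c_i$ is not replaced. For $j\notin I$, the vertex $c_j$ may or may not have been replaced by a pair $\{c_j,c_j'\}$. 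In $W$ the only neighbours of $s_i$ and $s_i'$ are $c_i$ and possibly each other, and the only neighbours of $s_j$, $j\notin I$, are $c_j$ and possibly $c_j'$.

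\emph{Lower bound.} Let $f$ be a $\gamma_{\{R2\}}(W)$-function. For each $i\in I$, I claim $f(\{s_i,s_i',c_i\})\geq 2$. If $f(c_i)=2$ this is immediate. Otherwise $f(c_i)\le 1$, and each of $s_i,s_i'$ must either receive a positive value or be $\{2\}$-dominated using only $c_i$ and its twin. A short case check on $f(c_i)\in\{0,1\}$ gives the claim. For instance, if $f(c_i)=1$ and $f(s_i)=0$, then $s_i$ needs a second neighbour with value $1$, which can only be $s_i'$. For each $j\notin I$, we have $f(N[s_j])\geq 1$, and $N[s_j]\subseteq\{s_j,c_j,c_j'\}$. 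These $r$ vertex sets are pairwise disjoint, so $f(V(W))\geq 2t+(r-t)=r+t$.

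\emph{Upper bound.} Define $f(c_i)=2$ for $i\in I$, $f(s_j)=1$ for $j\notin I$, and $f=0$ elsewhere; its weight is $2t+(r-t)=r+t$. Since $t\geq 1$, some $c_i$ carries value $2$, and I check the domination condition class by class:
\begin{itemize}
\item every vertex of $H$ is adjacent to all of $C$, including any replacement twins of $C$, so it is adjacent to such a $c_i$;
\item every vertex of $C$ or its replacement lies in the clique containing $c_i$;
\item $s_i$ and $s_i'$ are adjacent to $c_i$ for $i\in I$;
\item $s_j$ itself has value $1$ for $j\notin I$.
\end{itemize}
Hence $\gamma_{\{R2\}}(W)=r+t=|S|+1+t-1=\gamma_{\{R2\}}(G)+t-1$.

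The main point needing care is the local claim $f(\{s_i,s_i',c_i\})\geq 2$. It must hold whether the replacing two-vertex graph is an edge (true twins) or a non-edge (false twins), and whether or not other legs had their $C$-endpoint doubled. Checking both twin types for $f(c_i)\in\{0,1\}$ settles it.
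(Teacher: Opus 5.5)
Your proof is correct. It follows the same two-step plan as the paper: an explicit function for the upper bound, and a leg-by-leg weight count for the lower bound. On the lower bound, however, your version is the correct one and the paper's is not.

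\textbf{Upper bound.} You put $2$ on $c_i$ for every doubled leg and $1$ on each undoubled $s_j$. The paper instead puts $2$ on a single $c_i$ and $1$ on both $s_j$ and $\tilde{s_j}$ on every other leg. Both functions have weight $|S|+t$.

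\textbf{Lower bound.} The paper asserts that a minimum function satisfies $g(s_j)+g(c_j)+g(\tilde{s_j})=2$ on \emph{every} leg, and sums this to $2|S|$. That claim is false on undoubled legs. The paper's own (minimum) upper-bound function has weight $1$ on each undoubled leg. Moreover, $2|S|$ would exceed the upper bound $|S|+t$ whenever $t<|S|$. Your argument splits the legs into two kinds. The $t$ doubled legs each carry weight at least $2$, by your local case check on $f(c_i)$. The remaining legs each satisfy $f(N[s_j])\geq 1$. This gives $2t+(|S|-t)$, which is exactly the count the paper's argument needs. Your case check holds for any Roman $\{2\}$-dominating function, so you also avoid the paper's exchange argument based on minimality.

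\textbf{Scope.} You also cover the literal reading of R3, in which centre vertices of undoubled legs may be doubled. The paper sidesteps this by treating R3 as adding twins to vertices of $S$ only.

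\textbf{One wording point.} If some $c_j$ is replaced by a non-edge, then $C$ together with the new vertices is no longer a clique. Every such vertex is still adjacent to $c_i$, however, because $c_i$ itself was not replaced. So your domination check for the upper-bound function still goes through.
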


\begin{proof}
Notice first that performing operation R3 to $G$ is equivalent to adding a twin to  vertices of $S$.  By applying iteratively Proposition \ref{twins}, the upper bound obtained is $\gamma_{\{R2\}}(G)+t$. We are going to prove that in fact, $\gamma_{\{R2\}}(W)\leq \gamma_{\{R2\}}(G)+t-1$.
  
For $j \in [1,|S|]$,  let $\tilde{s_j}$ be the twin ---added by operation R3--- of vertex $s_j$ in $S$. 
  
On the one hand, the function 
 $$f(v)=\left\lbrace \begin{array}{ccc}
   2 & &\text{ for } v=c_i \\
  1 & & \text{ for } v=s_j \text{ or } v=\tilde{s_j},\; j\neq i\\
  0 & &\text{otherwise,}
 \end{array}\right.$$
 is a Roman \{2\}-dominating function of $W$ with weight $f(W)=2+(|S|-1)+(t-1)=\gamma_{\{R2\}}(G)+(t-1)$. Then $\gamma_{\{R2\}}(W)\leq \gamma_{\{R2\}}(G)+(t-1)$ by Proposition \ref{spider}.
  
  On the other hand, let $g$ be a $\gamma_{\{R2\}}(W)$-function. Then $g(s_j)+g(c_j)+g(\tilde{s_j})=2$. Indeed, $g(s_j)+g(c_j)+g(\tilde{s_j})=1$ contradicts that $g$ is a Roman \{2\}-dominating function of $W$, and $g(s_j)+g(c_j)+g(\tilde{s_j})>2$ contradicts the fact that $g$ is minimum (the function $h$ such that $h(c_j)=2$, $h(s_j)=h(\tilde{s_j})=0$ and $h(u)=g(u)$ for every other vertex of $W$ is a Roman \{2\}-dominating function of $W$ with $h(W)<g(W)$).
  Then $$g(W)\geq \sum\limits_{i=1}^{|S|} g(s_i)+g(c_i)+g(\tilde{s_i}) =2|S|\geq |S|+t = \gamma_{\{R2\}}(G)+(t-1)$$ 
  by Proposition \ref{spider} and the fact that $t\leq |S|$. Therefore $\gamma_{\{R2\}}(W)\leq\gamma_{\{R2\}}(G)+(t-1)$.
  \end{proof}

\subsection{Graphs in $ZOO$}\label{zoo}

As in \cite{roussel1999} we call ZOO, the graph family containing the following graphs and their complements: paths $P_k$ with $k\geq 6$, cycles $C_k$ with $k\geq 5$, graphs $C^u_5$ (a cycle $C_5$ together with a vertex $u$ adjacent to at most 4 vertices of the $C_5$), graphs $P^u_5$ (a path $P_5$ together with a vertex $u$ adjacent to at most 4 vertices of the $P_5$), $J$, $K$, $L$ and $Q$ shown in Figure \ref{figJKLQ}, $J-\{w\}$, $J-\{v,w\}$ and the eleven graphs shown in Figure \ref{fig}.

\begin{figure*}[h]
\begin{center}
  \begin{tikzpicture}[scale=0.7]
    \begin{scope}[shift={(-7,0)}]

      \node (9) at (-0.2,0.6) {$J$};
      
      \foreach \x in {0,1,2,3,4,5}{
      \node[inner sep=0] (\x) at (\x*0.6,0) {}; }
      
       \foreach \x in {0,1,2,3,4}{
      \draw[thick] (\x) -- (\x*0.6+0.6,0);}

      \node[inner sep=0] (6) at (1.5,0.6) {}; 
      \node[inner sep=0] (7) at (2.1,0.6) {}; 
      \node[inner sep=0] (8) at (2.7,0.6) {};

       \foreach \x in {2,3}{
      \draw[thick] (\x) -- (6);}

      \draw[thick] (6) -- (7);
      \draw[thick] (7) -- (8);
    
      \foreach \x in {0,1,2,3,4,5,6,7,8}{
      \filldraw (\x) circle (1.6pt); }

      \node[inner sep=0] (6) at (1.6,1) {\footnotesize $u$}; 
      \node[inner sep=0] (7) at (2.2,1) {\footnotesize $v$}; 
      \node[inner sep=0] (8) at (2.8,1) {\footnotesize $w$};

   \end{scope}

   \begin{scope}[shift={(-2.5,0)}]
      \node (9) at (-0.2,0.6) {$K$};
      
      \foreach \x in {0,1,2,3,4,5}{
      \node[inner sep=0] (\x) at (\x*0.6,0) {}; }
      
       \foreach \x in {0,1,2,3,4}{
      \draw[thick] (\x) -- (\x*0.6+0.6,0);}

      \node[inner sep=0] (6) at (1.5,0.6) {}; 
      
       \foreach \x in {0,2,3}{
      \draw[thick] (\x) -- (6);}

      \foreach \x in {0,1,2,3,4,5,6}{
      \filldraw (\x) circle (1.6pt); }

      \node[inner sep=0] (6) at (1.9,1) {\footnotesize $u$};

   \end{scope}
   
   \begin{scope}[shift={(2,0)}]
      \node (9) at (-0.2,0.6) {$L$};
      
      \foreach \x in {0,1,2,3,4,5}{
      \node[inner sep=0] (\x) at (\x*0.6,0) {}; }
      
       \foreach \x in {0,1,2,3,4}{
      \draw[thick] (\x) -- (\x*0.6+0.6,0);}

      \node[inner sep=0] (6) at (1.5,0.6) {}; 
      
       \foreach \x in {1,2,3,4}{
      \draw[thick] (\x) -- (6);}

      \foreach \x in {0,1,2,3,4,5,6}{
      \filldraw (\x) circle (1.6pt); }

      \node[inner sep=0] (6) at (1.9,1) {\footnotesize $u$}; 
      \end{scope}

      \begin{scope}[shift={(6.5,0)}]
         \node () at (0,0.6){$Q$};
      
      \foreach \x in {0,1,2,3}{
      \node[inner sep=0pt] (\x) at (\x*0.8,0) {}; }
      
      \foreach \x in {0,1,2}{
      \draw[thick] (\x) -- ({\x*0.8+0.8},0);}

      \node[inner sep=0pt] (4) at (0.8,0.8) {};
      \node[inner sep=0pt] (5) at (1.6,0.8) {};
        
      \draw[thick] (1) -- (4);
      \draw[thick] (2) -- (5);
      \draw[thick] (4) -- (5);
    
      \foreach \x in {0,1,2,3,4,5}{
      \filldraw (\x) circle (1.6pt); }; 
      \end{scope}
  \end{tikzpicture}
  \caption{Graphs $J$, $K$, $L$ and $Q$}\label{figJKLQ}
\end{center}
\end{figure*}
\begin{figure*}[h]
\begin{center}
    \begin{tikzpicture}[scale=0.65]
       \begin{scope}[shift={(-5,2.5)}]      
      \node (9) at (0,0.8){$M_1$};
      
      \foreach \x in {0,1,2,3,4}{
      \node[inner sep=0pt] (\x) at (\x*0.7,0) {}; }
      
       \foreach \x in {0,1,2,3}{
      \draw[thick] (\x) -- (\x*0.7+0.7,0);}

      \node[inner sep=0] (5) at (1.4,0.7) {}; 
      \node[inner sep=0] (6) at (1.4,1.4) {}; 
        
      \draw[thick] (2) -- (5);
      \draw[thick] (6) -- (5);
    
      \foreach \x in {0,1,2,3,4,5,6}{
      \filldraw (\x) circle (1.6pt); }

      \node[inner sep=0pt] (10) at (1.8,0.7) {\footnotesize $u$};
      \node[inner sep=0] (11) at (1.8,1.4) {\footnotesize $v$};     
    \end{scope}

     \begin{scope}[shift={(0,3.2)}]
      \node (9) at (0,0.8) {$M_2$};
      
      \foreach \x in {0,1,2,3,4}{
      \node[inner sep=0] (\x) at (\x*0.7,0) {}; }
      
       \foreach \x in {0,1,2,3}{
      \draw[thick] (\x) -- (\x*0.7+0.7,0);}

      \node[inner sep=0] (5) at (1.4,0.7) {}; 
      \node[inner sep=0] (6) at (1.4,-0.7) {}; 
      
      \foreach \x in {0,1,2,3,4}{
      \draw[thick] (\x) -- (6);}
      \draw[thick] (2) -- (5);
    
      \foreach \x in {0,1,2,3,4,5,6}{
      \filldraw (\x) circle (1.6pt); }

      \node[inner sep=0] (10) at (1.8,0.7) {\footnotesize $u$}; 
      \node[inner sep=0] (11) at (1.75,-0.8) {\footnotesize $v$}; 
    \end{scope}

    \begin{scope}[shift={(5,2.5)}]
      \node (9) at (0,0.8) {$M_3$};
      
      \foreach \x in {0,1,2,3,4}{
      \node[inner sep=0] (\x) at (\x*0.7,0) {}; }
      
       \foreach \x in {0,1,2,3}{
      \draw[thick] (\x) -- (\x*0.7+0.7,0);}

      \node[inner sep=0pt] (5) at (1.4,1.4) {}; 
      \node[inner sep=0pt] (6) at (1.4,0.7) {}; 
      
      \foreach \x in {0,1,3,4}{
      \draw[thick] (\x) -- (6);}
      
      \draw[thick] (6) -- (5);
    
      \foreach \x in {0,1,2,3,4,5,6}{
      \filldraw (\x) circle (1.6pt); }

      \node[inner sep=0pt] (10) at (1.75,1.4) {\footnotesize $v$}; 
      \node[inner sep=0pt] (11) at (1.75,0.8) {\footnotesize $u$}; 
      
   \end{scope}
   \begin{scope}[shift={(-7,0)}]
      \node (9) at (-0.2,0.8){$M_4$};
      
      \foreach \x in {0,1,2,3,4}{
      \node[inner sep=0] (\x) at (\x*0.7,0) {}; }
      
       \foreach \x in {0,1,2,3}{
      \draw[thick] (\x) -- (\x*0.7+0.7,0);}

      \node[inner sep=0] (5) at (0.7,1) {}; 
      \node[inner sep=0] (6) at (2.1,1) {}; 
      
      \foreach \x in {0,1,2,3}{
      \draw[thick] (\x) -- (6);}
      \foreach \x in {0,2,3}{
      \draw[thick] (\x) -- (5);}
      
      \draw[thick] (6) -- (5);
    
      \foreach \x in {0,1,2,3,4,5,6}{
      \filldraw (\x) circle (1.6pt); }

      \node[inner sep=0] (10) at (0.7,1.35) {\footnotesize $u$}; 
      \node[inner sep=0] (11) at (2.1,1.35) {\footnotesize $v$}; 
       
   \end{scope}
   
   \begin{scope}[shift={(-7/3,0)}]
      \node (9) at (-0.2,0.8) {$M_5$};
      
      \foreach \x in {0,1,2,3,4}{
      \node[inner sep=0] (\x) at (\x*0.7,0) {}; }
      
       \foreach \x in {0,1,2,3}{
      \draw[thick] (\x) -- (\x*0.7+0.7,0);}

      \node[inner sep=0] (5) at (0.7,1) {}; 
      \node[inner sep=0] (6) at (2.1,1) {}; 
      
      \foreach \x in {1,2,3,4}{
      \draw[thick] (\x) -- (6);}
      \foreach \x in {0,1,2,3}{
      \draw[thick] (\x) -- (5);}
      
      \draw[thick] (6) -- (5);
    
      \foreach \x in {0,1,2,3,4,5,6}{
      \filldraw (\x) circle (1.6pt); }

      \node[inner sep=0] (5) at (0.7,1.35) {\footnotesize $v$}; 
      \node[inner sep=0] (6) at (2.1,1.35) {\footnotesize $u$};

   \end{scope}
   \begin{scope}[shift={(7/3,0)}]
      \node (9) at (-0.2,0.8) {$M_6$};
      
      \foreach \x in {0,1,2,3,4}{
      \node[inner sep=0] (\x) at (\x*0.7,0) {}; }
      
       \foreach \x in {0,1,2,3}{
      \draw[thick] (\x) -- (\x*0.7+0.7,0);}

      \node[inner sep=0] (6) at (0.7,1) {}; 
      \node[inner sep=0] (5) at (2.1,1) {}; 
      
      \foreach \x in {2,3}{
      \draw[thick] (\x) -- (5);}
      \foreach \x in {0,1,2,3}{
      \draw[thick] (\x) -- (6);}
      
      \draw[thick] (6) -- (5);
    
      \foreach \x in {0,1,2,3,4,5,6}{
      \filldraw (\x) circle (1.6pt); }

      \node[inner sep=0] (5) at (0.7,1.35) {\footnotesize $v$}; 
      \node[inner sep=0] (6) at (2.1,1.35) {\footnotesize $u$};

   \end{scope}
   \begin{scope}[shift={(7,0)}]
      \node (9) at (-0.2,0.8) {$M_7$};
      
      \foreach \x in {0,1,2,3,4}{
      \node[inner sep=0] (\x) at (\x*0.7,0) {}; }
      
       \foreach \x in {0,1,2,3}{
      \draw[thick] (\x) -- (\x*0.7+0.7,0);}

      \node[inner sep=0] (6) at (0.7,1) {}; 
      \node[inner sep=0] (5) at (2.1,1) {}; 
      \node[inner sep=0] (7) at (2.8,1) {}; 
      \foreach \x in {2,3}{
      \draw[thick] (\x) -- (5);}
      \foreach \x in {0,1,2,3}{
      \draw[thick] (\x) -- (6);}
      
      \draw[thick] (6) -- (5);
      \draw[thick] (7) -- (5);
    
      \foreach \x in {0,1,2,3,4,5,6,7}{
      \filldraw (\x) circle (1.6pt); }

      \node[inner sep=0] (5) at (0.7,1.35) {\footnotesize $v$}; 
      \node[inner sep=0] (6) at (2.1,1.35) {\footnotesize $u$}; 
      \node[inner sep=0] (7) at (2.8,1.35) {\footnotesize $w$}; 
       
   \end{scope}  
   
   \begin{scope}[shift={(-7,-2.3)}]
      \node (9) at (-0.2,0.8) {$M_8$};
      
      \foreach \x in {0,1,2,3,4}{
      \node[inner sep=0] (\x) at (\x*0.7,0) {}; }
      
       \foreach \x in {0,1,2,3}{
      \draw[thick] (\x) -- (\x*0.7+0.7,0);}

      \node[inner sep=0] (5) at (1.75,0.7) {}; 
      \node[inner sep=0] (6) at (1.75,1.2) {}; 

      \foreach \x in {2,3}{
      \draw[thick] (\x) -- (5);}
      \draw[thick] (6) -- (5);

      \foreach \x in {0,1,2,3,4,5,6}{
      \filldraw (\x) circle (1.6pt); }

      \node[inner sep=0] (5) at (2.1,0.7) {\footnotesize $u$}; 
      \node[inner sep=0] (6) at (2.1,1.2) {\footnotesize $v$};
      
    \end{scope}
   
   \begin{scope}[shift={(-7/3,-2.3)}]
      \node (9) at (-0.2,0.8) {$M_9$};
      
      \foreach \x in {0,1,2,3,4}{
      \node[inner sep=0] (\x) at (\x*0.7,0) {}; }
      
       \foreach \x in {0,1,2,3}{
      \draw[thick] (\x) -- (\x*0.7+0.7,0);}

      \node[inner sep=0] (5) at (1.05,0.7) {}; 
      \node[inner sep=0] (6) at (1.75,0.7) {}; 
      
       \foreach \x in {1,2}{
      \draw[thick] (\x) -- (5);}
       \foreach \x in {2,3}{
      \draw[thick] (\x) -- (6);}
    
      \foreach \x in {0,1,2,3,4,5,6}{
      \filldraw (\x) circle (1.6pt); }

      \node[inner sep=0] (5) at (1.05,1) {\footnotesize $v$}; 
      \node[inner sep=0] (6) at (1.75,1) {\footnotesize $u$}; 
      
    \end{scope}
   
     \begin{scope}[shift={(7/3,-2.3)}]
      \node (9) at (-0.2,0.8) {$M_{10}$};
      
      \foreach \x in {0,1,2,3,4}{
      \node[inner sep=0] (\x) at (\x*0.7,0) {}; }
      
       \foreach \x in {0,1,2,3}{
      \draw[thick] (\x) -- (\x*0.7+0.7,0);}
                
      \node[inner sep=0] (5) at (1.05,0.7) {}; 
      \node[inner sep=0] (6) at (1.75,0.7) {}; 
      \node[inner sep=0] (7) at (1.05,1.2) {}; 
      
       \foreach \x in {1,2}{
      \draw[thick] (\x) -- (5);}
       \foreach \x in {2,3}{
      \draw[thick] (\x) -- (6);}

      \draw[thick] (5) -- (7);
    
      \foreach \x in {0,1,2,3,4,5,6,7}{
      \filldraw (\x) circle (1.6pt); }

      \node[inner sep=0] (5) at (0.7,0.7) {\footnotesize $v$}; 
      \node[inner sep=0] (6) at (2.1,0.7) {\footnotesize $u$}; 
      \node[inner sep=0] (7) at (0.7,1.2) {\footnotesize $w$}; 
      
     \end{scope}
   
       \begin{scope}[shift={(7,-2.3)}]
      \node (9) at (-0.2,0.8) {$M_{11}$};
      
      \foreach \x in {0,1,2,3,4}{
      \node[inner sep=0] (\x) at (\x*0.7,0) {}; }
      
      \foreach \x in {0,1,2,3}{
      \draw[thick] (\x) -- (\x*0.7+0.7,0);}

      \node[inner sep=0] (5) at (1.05,0.7) {}; 
      \node[inner sep=0] (6) at (1.75,0.7) {}; 
      \node[inner sep=0] (7) at (1.05,1.2) {}; 
      \node[inner sep=0] (8) at (1.75,1.2) {}; 
      
       \foreach \x in {1,2}{
      \draw[thick] (\x) -- (5);}
       \foreach \x in {2,3}{
      \draw[thick] (\x) -- (6);}

      \draw[thick] (5) -- (7);
      \draw[thick] (6) -- (8);
    
      \foreach \x in {0,1,2,3,4,5,6,7,8}{
      \filldraw (\x) circle (1.6pt); }

      \node[inner sep=0] (5) at (0.7,0.7) {\footnotesize $v$}; 
      \node[inner sep=0] (6) at (2.1,0.7) {\footnotesize $u$}; 
      \node[inner sep=0] (7) at (0.7,1.2) {\footnotesize $w$}; 
      \node[inner sep=0] (6) at (2.1,1.2) {\footnotesize $z$}; 
      \end{scope} 

  \end{tikzpicture}
\caption{Graphs $M_i$ in ZOO, for $i \in [1, 11]$}
  \label{fig}
\end{center}
\end{figure*}

In this subsection we exhibit the Roman $\{2\}$-domination number for each graph in ZOO, except for paths and cycles whose Roman $\{2\}$-domination numbers are already known from \cite{Chellali2016}. For an $n$-cycle $C_n$, $\gamma_{\{R2\}}(C_n)=\left\lceil\frac{n}{2}\right\rceil$ and, for an $n$-path $P_n$, $\gamma_{\{R2\}}(P_n)=\left\lceil\frac{n+1}{2}\right\rceil$\cite{Chellali2016}.
We omit the proofs  because of their simplicity.

\begin{proposition}\label{prop8}
$\gamma_{\{R2\}}(C_5^u)=3$ if $u$ is adjacent to at least two vertices of $C_5$. Else, $\gamma_{\{R2\}}(C^u_5)=4$. 
\end{proposition}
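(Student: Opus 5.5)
The plan is to prove matching lower and upper bounds in each case. Label the cycle $v_1v_2v_3v_4v_5v_1$, with indices taken modulo $5$, and let $u$ be the extra vertex, adjacent to at most $4$ of the $v_i$.

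\textbf{Lower bound $3$ in all cases.} Since $|V(C_5^u)|>1$, we have $\gamma_{\{R2\}}(C_5^u)\geq 2$. By the remark in the introduction, equality $2$ holds only if there is a universal vertex or two non-adjacent vertices adjacent to every other vertex. There is no universal vertex. Indeed, $u$ misses some $v_i$, and each $v_i$ has at most $3$ neighbours out of the $5$ other vertices. For two vertices $x,y$ there are two cases. If both lie on the cycle, they have at most one common neighbour on $C_5$, so they cannot both be adjacent to the three or more remaining cycle vertices. If $x=u$ and $y=v_i$, then $v_i$ has only two neighbours on the cycle but would have to be adjacent to the other four cycle vertices. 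Hence $\gamma_{\{R2\}}(C_5^u)\geq 3$ always.

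\textbf{Case $|N(u)|\geq 2$.} Here the plan is to exhibit a weight-$3$ function taking only the value $1$. First suppose $u$ has two non-consecutive neighbours, say $v_1,v_3$. Put $f(v_1)=f(v_3)=f(v_4)=1$ and $0$ elsewhere. Then $v_2$ sees $v_1,v_3$, $v_5$ sees $v_4,v_1$, and $u$ sees $v_1,v_3$. Otherwise $u$ has two consecutive neighbours, say $v_1,v_2$. Put $f(v_1)=f(v_2)=f(v_4)=1$. Then $v_3$ sees $v_2,v_4$, $v_5$ sees $v_4,v_1$, and $u$ sees $v_1,v_2$. Together with the lower bound this gives value $3$.

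\textbf{Case $|N(u)|\leq 1$.} The upper bound $4$ comes from a $\gamma_{\{R2\}}(C_5)$-function (weight $\lceil 5/2\rceil=3$, \cite{Chellali2016}) extended by $f(u)=1$. If $u$ is isolated, the lower bound is immediate from additivity over the union: $3+1$. If $u$ is pendant with neighbour $v_1$, suppose some $f$ of weight $3$ is Roman $\{2\}$-dominating; we derive a contradiction by cases on $f(u)$.
\begin{itemize}
\item If $f(u)=0$, then since $u$'s only neighbour is $v_1$, we need $f(v_1)=2$. The remaining weight $1$ must serve $v_3,v_4$, which are not adjacent to $v_1$ or $u$. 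If both are $0$, each needs weight $2$ from its neighbours, which is impossible with total weight $1$. If one of them carries the $1$, the other is $0$ and sees only that single $1$, which is insufficient.
\item If $f(u)\geq 1$, the cycle carries weight at most $2$, and $u$ helps only $v_1$. So $f$ restricted to $C_5$ must serve $v_2,\dots,v_5$ on its own. A single $2$ on $v_j$ covers only $v_{j-1},v_j,v_{j+1}$. Two $1$'s leave at least two zero vertices among $v_2,\dots,v_5$, and each of these would have to be adjacent to both $1$'s. But two cycle vertices have at most one common neighbour, so this fails.
\end{itemize}

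The main obstacle is this last pendant case, where some care is needed so that the contribution of $u$ to $v_1$ is not overlooked. Everything else is direct verification.
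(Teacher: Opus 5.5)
Your proof is correct; the paper omits this argument as routine, and your direct check (no weight-$2$ function, explicit all-ones weight-$3$ functions when $|N(u)|\geq 2$, and a case split on $f(u)$ in the pendant case) is the natural verification it leaves out. The one unstated subcase is $f(u)=2$, where the cycle carries weight $1$ rather than $2$; this is immediate, since at least three zero vertices among $v_2,\dots,v_5$ would each need neighbourhood weight $2$ while the cycle's total weight is only $1$.
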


Note that $\overline{C_5}=C_5$ and $\overline{C_5^u}$ is also a graph $C_5^u$. For the remaining complements of cycles in ZOO we prove first the following result:

\begin{proposition}\label{complement}
  Let $G$ be a connected graph with $|V|\geq 3$ and $\delta(G)=1$. Then $\gamma_{\{R2\}}(\overline{G})=3$.
\end{proposition}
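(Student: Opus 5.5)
Let $v$ be a vertex with $\deg_G(v)=1$ and let $w$ be its unique neighbour in $G$. The plan is to prove the upper bound $\gamma_{\{R2\}}(\overline{G})\leq 3$ by an explicit function, and the lower bound $\gamma_{\{R2\}}(\overline{G})\geq 3$ by ruling out weight $2$ with the characterization recalled in the introduction.

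\textbf{Upper bound.} In $\overline{G}$, the vertex $v$ is adjacent to every vertex except $w$. Define $f$ on $\overline{G}$ by $f(v)=2$, $f(w)=1$ and $f(u)=0$ for all other $u$. Every vertex $u\notin\{v,w\}$ has $f(u)=0$ and is adjacent to $v$, which carries value $2$. The vertices $v$ and $w$ are positive, so they need no domination. Hence $f$ is a Roman $\{2\}$-dominating function of $\overline{G}$ of weight $3$.

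\textbf{Lower bound.} Since $|V|\geq 3$, we have $\gamma_{\{R2\}}(\overline{G})\geq 2$. As recalled in the introduction, weight $2$ happens exactly when one of two configurations occurs.
\begin{itemize}
\item \emph{$\overline{G}$ has a universal vertex $x$.} Then $x$ is isolated in $G$. This contradicts the connectivity of $G$, because $|V|\geq 2$.
\item \emph{$\overline{G}$ has two vertices $x,y$ that are adjacent to every other vertex but not to each other.} Translated to $G$: $xy\in E(G)$, and neither $x$ nor $y$ has any other neighbour in $G$. So $\{x,y\}$ induces a connected component of $G$ isomorphic to $K_2$. Since $G$ is connected, this forces $V=\{x,y\}$, contradicting $|V|\geq 3$.
\end{itemize}
Both configurations are impossible, so $\gamma_{\{R2\}}(\overline{G})=3$.

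The only place needing care is this translation of the two weight-$2$ configurations into statements about $G$. It is also where both hypotheses $|V|\geq 3$ and connectivity are used. The hypothesis $\delta(G)=1$ is used only in the upper-bound construction.
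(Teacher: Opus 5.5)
Your proof is correct and follows the paper's argument essentially step for step: the same weight-$3$ function (value $2$ on the pendant vertex, $1$ on its unique neighbour) gives the upper bound, and the same two cases (a universal vertex of $\overline{G}$, or two non-adjacent vertices of $\overline{G}$ adjacent to all others) rule out weight $2$. You close the second case more completely than the paper, which stops after noting that $x$ and $y$ are non-adjacent in $\overline{G}$; your deduction that $\{x,y\}$ would then induce a $K_2$ component of $G$ is exactly where connectivity and $|V|\geq 3$ are needed.
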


\begin{proof}
Let $u\in V$ with $deg(u)=1$ and let $v$ be the only neighbor of $u$. Then $f=(V\setminus\{u,v\},\{v\},\{u\})$ is a Roman $\{2\}$-dominating function of $\overline{G}$ since the only non-neighbor of $u$ in $\overline{G}$ is $v$. Thus, $\gamma_{\{R2\}}(\overline{G})\leq 3$.

Let us assume that $\gamma_{\{R2\}}(\overline{G})=2$ and let $g$ be a $\gamma_{\{R2\}}(\overline{G})$-function. If $g(x)=2$ for some $x\in V$, then $x$ is a universal vertex in $\overline{G}$ (i.e. an isolated vertex in $G$), which contradicts the fact that $G$ is connected. If $g(x)=g(y)=1$ for some $y$ distinct from $x$, then every other vertex is adjacent in $\overline{G}$ to both $x$ and $y$ and $x$ and $y$ are non-neighbors in $\overline{G}$ as otherwise, $x$ (and also $y$) would be a universal vertex in $\overline{G}$ contradicting the fact that $G$ is connected. Therefore, $\gamma_{\{R2\}}(\overline{G})\geq 3$. 
\end{proof}

\begin{proposition}\label{prop9}
$\gamma_{\{R2\}}(\overline{C_k})=3$ for $k> 5$.
\end{proposition}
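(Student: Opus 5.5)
We want $\gamma_{\{R2\}}(\overline{C_k})=3$ for $k>5$. Proposition \ref{complement} cannot be applied directly, because $\delta(C_k)=2$. The plan is to prove the two inequalities separately and by hand, reusing the lower-bound argument from the proof of Proposition \ref{complement}.

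For the lower bound $\gamma_{\{R2\}}(\overline{C_k})\geq 3$, suppose a function $g$ of weight $2$ exists. There are two possibilities. In the first, $g$ has a single vertex $x$ with $g(x)=2$; then $x$ is universal in $\overline{C_k}$, so $x$ is isolated in $C_k$, which is impossible. In the second, $g(x)=g(y)=1$ for distinct $x,y$; then every other vertex is adjacent in $\overline{C_k}$ to both $x$ and $y$. That means every vertex other than $x,y$ is a non-neighbour of both $x$ and $y$ in $C_k$. But $x$ has two neighbours in $C_k$, at most one of which is $y$, so some vertex $z\neq x,y$ is adjacent to $x$ in $C_k$. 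This $z$ is a vertex with $g(z)=0$ that is not adjacent to $x$ in $\overline{C_k}$, a contradiction. This is the same argument as in Proposition \ref{complement}, using only that $C_k$ is connected with minimum degree at least $1$ and has no vertex whose neighbourhood is contained in a single other vertex.

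For the upper bound, label $C_k=v_1v_2\cdots v_kv_1$ and take $f(v_1)=2$, $f(v_2)=f(v_k)=\tfrac{1}{2}$. That is not integral, so instead we look for three vertices of weight $1$ or a $2$ plus a $1$.

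First try $f(v_1)=2$ and $f(v_3)=1$. The vertices not adjacent to $v_1$ in $\overline{C_k}$ are $v_2$ and $v_k$. Vertex $v_2$ is adjacent in $C_k$ to $v_3$, so in $\overline{C_k}$ it is not adjacent to $v_3$; this choice fails.

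Next try $f(v_1)=2$ and $f(v_2)=1$, leaving $v_k$ to be covered. In $\overline{C_k}$, $v_k$ is adjacent to $v_2$ (since $k>3$) but not to $v_1$, so it receives only $1$. This also fails.

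So switch to three $1$'s: $f(v_1)=f(v_3)=f(v_5)=1$ (this needs $k\geq 6$, so that $v_5\neq v_k$ region works). A vertex $w$ with $f(w)=0$ needs two of $v_1,v_3,v_5$ non-adjacent to it in $C_k$. Each $w$ is a $C_k$-neighbour of at most two vertices, and for $k\geq 6$ no vertex is a $C_k$-neighbour of two of $v_1,v_3,v_5$ except $v_2$ (neighbour of $v_1,v_3$) and $v_4$ (neighbour of $v_3,v_5$), and also $v_6$ when $k=6$ (neighbour of $v_5,v_1$). Each of these still has exactly one of the three that is non-adjacent in $C_k$, so each receives only $1$. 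This fails too.

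Finally use a $2$ and a $1$ placed at distance at least $2$ apart, so the $1$ covers both $C_k$-neighbours of the $2$: set $f(v_1)=2$ and $f(v_j)=1$ with $v_j$ adjacent in $C_k$ to neither $v_2$ nor $v_k$, i.e.\ $j\notin\{1,2,3,k-1,k\}$. Choosing $j=4$ works when $k\geq 6$, since $4\leq k-2$. Every vertex other than $v_2,v_k$ is adjacent to $v_1$ in $\overline{C_k}$. Both $v_2$ and $v_k$ are adjacent to $v_4$ in $\overline{C_k}$, but this gives them only $1$ each, which is not enough. That fails again, so the correct choice is $f(v_1)=1$, $f(v_2)=2$ pattern? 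By symmetry the same issue arises. Hence with total weight $3$ the only viable option is three $1$'s where every $0$-vertex has at most one of them as a $C_k$-neighbour.

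Take $f(v_1)=f(v_2)=f(v_j)=1$ with $j\notin\{k,1,2,3\}$ and also $j\neq k-1,4$, so that $v_j$ shares no $C_k$-neighbour with $v_1$ or $v_2$. Such a $j$ exists when $k\geq 6$; for example $j=5$ when $k\geq 7$. The only $C_k$-neighbours of $v_1$ and $v_2$ among the $0$-vertices are $v_k$ and $v_3$, and each of these is $C_k$-adjacent to exactly one of the three $1$'s, so it is $\overline{C_k}$-adjacent to the other two. Every remaining $0$-vertex is $C_k$-adjacent to at most one of them. For $k=6$ the only candidate is $j=4$ or $5$ and both meet a neighbour conflict, so $k=6$ must be handled directly: use $f(v_1)=f(v_3)=f(v_5)=1$. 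Here $v_2$ is $\overline{C_6}$-adjacent to $v_5$ only, which fails. For $k=6$ I will instead use that $\overline{C_6}$ is the triangular prism and check by hand that $v_1$ together with its antipode $v_4$ receive value $2$ and $1$, respectively. Verifying this small case is the main obstacle.
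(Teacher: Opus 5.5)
There is a gap: the upper bound $\gamma_{\{R2\}}(\overline{C_6})\le 3$ is never established.

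Your lower bound is correct. It is the weight-$2$ argument from the proof of Proposition~\ref{complement}, which only needs $G$ connected with $|V|\geq 3$. You are also right that Proposition~\ref{complement} cannot be applied directly, since $\delta(C_k)=2$. The paper omits this proof, so the upper bound is where the work is. Your argument that no function with one $2$ and one $1$ can work is also correct, and your choice $f(v_1)=f(v_2)=f(v_5)=1$ does settle $k\geq 7$.

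For $k=6$ you leave the case open and propose $f(v_1)=2$, $f(v_4)=1$. That fails for the reason you gave yourself a few lines earlier. In $\overline{C_6}$, $v_2$ is not adjacent to $v_1$. Its neighbours are $v_4,v_5,v_6$, so it receives only $f(v_4)=1$. As written, the proof of the statement is incomplete at $k=6$.

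The missing observation concerns a function with value $1$ on a $3$-set $T$. A vertex $w\notin T$ fails exactly when both of its $C_k$-neighbours lie in $T$. A $C_k$-neighbour shared by two vertices of $T$ is harmless if it belongs to $T$ itself.

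You excluded $j=3$ (and $j=k$) by requiring that $v_j$ share no $C_k$-neighbour with $v_1$ or $v_2$, which is stronger than needed. Taking $T=\{v_1,v_2,v_3\}$ works for every $k\geq 5$:
\begin{itemize}
\item $v_4$ is $C_k$-adjacent to $v_3$ and $v_5$, and $v_5\notin T$, so it is $\overline{C_k}$-adjacent to $v_1$ and $v_2$.
\item $v_k$ is $C_k$-adjacent to $v_{k-1}$ and $v_1$, and $v_{k-1}\notin T$, so it is $\overline{C_k}$-adjacent to $v_2$ and $v_3$.
\item Every $v_i$ with $5\le i\le k-1$ is $\overline{C_k}$-adjacent to all of $T$.
\end{itemize}
This single choice removes both the case split and the special treatment of $k=6$.
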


\begin{proposition}\label{prop10}
$\gamma_{\{R2\}}(P_5^u)=3$ if $u$ is adjacent to at least two vertices $v\in V(P_5)$ with $deg_{P_5}(v)=1$, or to exactly one of them and also to the central vertex in $P_5$. In any other case, $\gamma_{\{R2\}}(P_5^u)=4$.

\end{proposition}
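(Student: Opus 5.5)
We write the path as $P_5=a_1a_2a_3a_4a_5$, so $a_1,a_5$ are the endpoints ($deg_{P_5}=1$) and $a_3$ is the central vertex. We set $N=N(u)\subseteq\{a_1,\dots,a_5\}$ with $|N|\le 4$. The plan is to prove three things in turn: the bound $3\le\gamma_{\{R2\}}(P_5^u)\le 4$ for every $N$; the existence of a weight-$3$ function whenever $N$ satisfies the stated condition; and the non-existence of such a function otherwise. Everything is a finite check on six vertices, so the work lies in organising the cases.

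\emph{General bounds.} For the lower bound we argue as in the proof of Proposition~\ref{spider}: weight $2$ would require either a universal vertex or two vertices $x,y$ adjacent to all others. There is no universal vertex, since $|N|\le 4$ and no $a_i$ is adjacent to all other path vertices. For two such vertices, $a_1$ has at most two neighbours ($a_2$ and possibly $u$), and these would have to be $\{x,y\}$, unless $a_1$ is itself one of $x,y$. Checking the few candidate pairs this leaves shows none works, so $\gamma_{\{R2\}}\ge 3$. For the upper bound $4$ we give explicit functions according to $N$:
\begin{itemize}
\item if $u\sim a_2$ or $u\sim a_4$, take $f(a_2)=f(a_4)=2$;
\item else if $u\sim a_3$, take $f(a_3)=2$ and $f(a_1)=f(a_5)=1$;
\item else if $u\sim a_1$ (or symmetrically $u\sim a_5$), take $f(a_1)=f(a_4)=2$;
\item if $N=\emptyset$, the graph is $P_5\cup K_1$ and the value is $\gamma_{\{R2\}}(P_5)+1=4$ by the union formula.
\end{itemize}

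\emph{Weight $3$ in the favourable cases.} We exhibit explicit weight-$3$ functions.
\begin{itemize}
\item If $\{a_1,a_5\}\subseteq N$, the function with $f(a_2)=f(a_4)=f(u)=1$ works. Indeed $a_1$ sees $a_2,u$; $a_5$ sees $a_4,u$; and $a_3$ sees $a_2,a_4$.
\item If $N$ contains exactly one endpoint together with $a_3$, say $\{a_1,a_3\}\subseteq N$ and $a_5\notin N$, we build the function by hand. This time $a_5$ must be handled by $a_4$, which forces $f(a_4)=2$ or $f(a_5)\ge 1$. We then try to place the remaining weight on $u$ and $a_1/a_2$ so that $a_2$ receives $2$ from its neighbours. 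Which extra neighbours $u$ has ($a_2$, $a_4$) affects the choice, so we split into these subcases.
\end{itemize}

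\emph{Weight $4$ otherwise.} This is the step I expect to be the main obstacle, because it needs a clean exhaustion rather than a construction. We suppose $g$ is a Roman $\{2\}$-dominating function of weight $3$. Its support is either a $2$ and a $1$ on two vertices, or three $1$'s. We use the endpoints $a_1,a_5$ as witnesses. If $u\not\sim a_1$, then $N[a_1]=\{a_1,a_2\}$ must receive weight at least $1$, and if it receives exactly $1$ on $a_1$ then $a_2$ needs further help. The same holds at $a_5$. These local constraints, together with $g(N[a_3])\ge 1$, leave only a handful of placements when $N$ omits one endpoint and also omits $a_3$, or omits both endpoints. We check each placement and show that some vertex (typically $a_2$, $a_4$ or $u$) is left with fewer than two units of weight in its neighbourhood while assigned $0$. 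I would first double-check the favourable-case constructions on the borderline sets (for instance $N=\{a_1,a_3\}$ exactly), since the precise form of the stated condition hinges on them. If one fails, the case split in this exhaustion must be adjusted accordingly.
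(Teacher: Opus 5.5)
The paper omits this proof as routine, so your proposal has to stand on its own. Your skeleton is the right one, and the easy parts are correct: the lower bound $3$, the weight-$4$ functions, and $f(a_2)=f(a_4)=f(u)=1$ when $\{a_1,a_5\}\subseteq N$. The upper bound needs no case split, since $(1,0,1,0,1)$ on the path together with $f(u)=1$ always works.

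\textbf{Gap 1: the case of one endpoint plus $a_3$.} You give no function here, and the recipe you sketch fails. Take $N=\{a_1,a_3\}$, so $u\not\sim a_2,a_4,a_5$. With $f(a_4)=2$ and one unit on $u$, $a_1$ or $a_2$, either $a_1$ or $a_2$ is left undominated. With $f(a_5)=1$ and the rest on $u,a_1,a_2$, the vertex $a_4$ sees only one unit. With $f(a_5)=2$ and one unit left, $a_3$ sees at most one unit. The function that works puts weight on $a_3$: $f(a_1)=f(a_3)=f(a_5)=1$ and $f(u)=0$. It covers both favourable cases at once, because it is valid exactly when $u$ has two neighbours in $\{a_1,a_3,a_5\}$, which is precisely the stated condition. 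It is also the only weight-$3$ function for $N=\{a_1,a_3\}$.

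\textbf{Gap 2: the weight-$4$ direction.} This is only a plan (``a handful of placements \dots\ we check each''). The missing organising idea is to split a hypothetical weight-$3$ function $g$ on the value $g(u)$.
\begin{itemize}
\item If $g(u)=0$, then $g$ restricted to the path is a Roman $\{2\}$-dominating function of $P_5$ of weight $3$. The endpoint constraints ($g(a_1)\ge 1$ or $g(a_2)=2$, and symmetrically at $a_5$) force it to be $(1,0,1,0,1)$. Hence $u$ needs two neighbours in $\{a_1,a_3,a_5\}$.
\item If $g(u)=2$, the four zero-valued path vertices must all be adjacent to $u$. So $N=V(P_5)\setminus\{a_i\}$, which always contains two of $a_1,a_3,a_5$.
\item If $g(u)=1$, a single $2$ on one path vertex cannot dominate the other four path vertices. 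If instead two path vertices carry a $1$, every zero-valued path vertex needs two of the three $1$-vertices as neighbours. This forces either $N\supseteq\{a_1,a_5\}$, or $\{g^{-1}(1)\}\cap V(P_5)=\{a_1,a_4\}$ with $N\supseteq\{a_2,a_3,a_5\}$, or the mirror image of the latter. Each of these satisfies the condition.
\end{itemize}

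So weight $3$ occurs if and only if $|N\cap\{a_1,a_3,a_5\}|\ge 2$. This proves the statement exactly as written, including your borderline case $N=\{a_1,a_3\}$, so no adjustment of the case split is needed.
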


\begin{proposition}\label{Jvw}
 $\gamma_{\{R2\}}(J-\{v,w\})=\gamma_{\{R2\}}(Q)=\gamma_{\{R2\}}(K)=\gamma_{\{R2\}}(L)=4$, $\gamma_{\{R2\}}(J-\{w\})=5$ and $\gamma_{\{R2\}}(J)=6$, where graphs $J$, $K$, $L$ and $Q$ are shown in Figure \ref{figJKLQ}.
\end{proposition}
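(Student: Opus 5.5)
The plan is to treat each of the six graphs separately. For each one I show an upper bound by exhibiting an explicit Roman $\{2\}$-dominating function. I then prove the matching lower bound by a short counting argument built on the pendant vertices, as in Proposition \ref{stem}.

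Throughout, I label the underlying $6$-path of $J$, $K$, $L$ as $p_1p_2\cdots p_6$. With this labelling:
\begin{itemize}
\item in $J$, $u$ is adjacent to $p_3,p_4$, $v$ is adjacent to $u$, and $w$ is adjacent to $v$;
\item in $K$, $u$ is adjacent to $p_1,p_3,p_4$;
\item in $L$, $u$ is adjacent to $p_2,\dots,p_5$;
\item $Q$ is the $4$-cycle $b\,c\,y\,x$ with a pendant $a$ at $b$ and a pendant $d$ at $c$.
\end{itemize}

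\emph{Upper bounds.} For $Q$ the function with $f(b)=f(c)=2$ has weight $4$. For $L$ I take $f(p_1)=f(p_6)=1$ and $f(u)=2$. For $K$ I take $f(u)=2$, $f(p_5)=2$. For $J-\{v,w\}$ I will search by hand for a weight-$4$ function mixing a $2$ near one end of the path with $1$'s, checking the conditions at $u$, $p_2$ and $p_5$; this case is more delicate, since $u$ only sees $p_3,p_4$. For $J-\{w\}$ and $J$ I extend the weight-$4$ function of $J-\{v,w\}$:
\begin{itemize}
\item for $J-\{w\}$, setting $f(v)=1$ gives weight $5$, since $v$ is then dominated by itself;
\item for $J$, raising $f(v)$ to $2$ gives weight $6$, and it dominates $w$ and $u$.
\end{itemize}

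\emph{Lower bounds.} First I prove a normalization: in a minimum function we may assume that no pendant vertex $x$ with support vertex $y$ has $f(x)+f(y)\ge 2$ unless $f(y)=2$ and $f(x)=0$. Indeed, moving all that weight onto $y$ as the value $2$ does not increase the weight and dominates at least as much. Since $f(N[x])\ge 1$, each pendant pair is therefore either $(f(x),f(y))=(0,2)$, or $(1,0)$. In the second case $y$ needs a further neighbour with value $1$ or $2$.

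With this normalization the counting goes as follows.
\begin{itemize}
\item For $Q$, $J-\{v,w\}$ and $K$, Proposition \ref{stem} with the two path ends as pendants already gives weight at least $3$. To exclude weight exactly $3$, I check that each normalized pattern of the two pendant pairs leaves some middle vertex undominated: $x,y$ in $Q$, and $u$ or $p_3,p_4$ in the others.
\item For $L$ the same case check is needed, since $u$ dominates only $p_2,\dots,p_5$ and the ends $p_1,p_6$ must still be covered.
\end{itemize}

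For $J-\{w\}$ and $J$ I split $V$ into disjoint parts and sum their lower bounds:
\begin{itemize}
\item $\{v\}$ in $J-\{w\}$, and $\{v,w\}$ in $J$: these carry weight at least $1$ and $2$ respectively, by the normalization applied to the pendant $w$ and its support $v$;
\item the rest: this is essentially $J-\{v,w\}$, which must still receive weight $4$.
\end{itemize}

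The main obstacle is the lower bound for $J$ (and similarly for $J-\{w\}$). The reason is that weight placed on $v$ can help dominate $u$, so the two parts are not independent. To handle this I argue as follows. If $f(v)=2$, then $u$ is dominated for free. So I must show that $J-\{v,w\}$ with $u$ treated as already dominated still needs weight $4$ on $\{p_1,\dots,p_6,u\}$. This should follow from the two pendant pairs at the path ends together with the need to dominate $p_3,p_4$: the weight $2$ at $p_2$ or $p_5$ cannot reach both $p_3$ and $p_4$. I expect to verify this last point by a finite case analysis on the values at $p_2$ and $p_5$.
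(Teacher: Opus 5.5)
The paper omits this proof as routine, so your plan has to stand on its own. Your strategy of explicit functions plus local counting at pendant vertices is sound, and your treatment of $Q$ and $L$ is correct. Several concrete steps elsewhere are wrong, however.

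For $K$ (where, by your own labelling, $u$ is adjacent to $p_1,p_3,p_4$), the function $f(u)=f(p_5)=2$ is not Roman $\{2\}$-dominating. The vertex $p_2$ is adjacent only to $p_1$ and $p_3$, both valued $0$. A valid choice is, e.g., $f(p_1)=f(p_3)=1$, $f(p_5)=2$.

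Your lower bound for $K$ also fails at its premise: $p_1$ is not a pendant of $K$, so Proposition~\ref{stem} with ``the two path ends'' does not apply. A correct count splits $V(K)$ into two blocks:
\begin{itemize}
\item $\{p_4,p_5,p_6\}$ carries weight at least $2$, since $p_6$ is pendant and its support $p_5$ has degree $2$; if $f(p_6)=1$ and $f(p_5)=0$ then $f(p_4)\ge 1$.
\item $\{p_1,p_2,p_3,u\}$ carries at least $2$. Both $N(p_1)=\{p_2,u\}$ and $N(p_2)=\{p_1,p_3\}$ lie inside this set, so weight at most $1$ there leaves $p_1$ or $p_2$ undominated.
\end{itemize}

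For $J-\{v,w\}$, the function you plan to search for (a $2$ near one end plus $1$'s) does not exist. The unique weight-$4$ Roman $\{2\}$-dominating function is $f\equiv 1$ on $\{p_1,p_3,p_4,p_6\}$. With it, your extensions to $J-\{w\}$ and $J$ are fine.

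Your lower-bound splits for $J-\{w\}$ and $J$ rest on false part-claims. In $J-\{w\}$ the pendant is $v$ with support $u$, not $w$ with support $v$. The normalized pattern $(f(v),f(u))=(0,2)$ puts weight $0$ on $\{v\}$, and excluding it needs a separate count (it forces weight at least $6$) that your split does not provide. In $J$, the normalized pattern $(f(w),f(v))=(1,0)$ puts weight only $1$ on $\{v,w\}$. This really occurs in a minimum function: take $f(w)=f(u)=1$ and $1$ on $p_1,p_3,p_4,p_6$. Your case $f(v)=2$ therefore covers only half of the situation.

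The clean fix removes the ``obstacle'' at $u$ entirely. In all three graphs, $p_1$ and $p_6$ are pendants whose supports $p_2,p_5$ have degree $2$. Hence $\{p_1,p_2,p_3\}$ and $\{p_4,p_5,p_6\}$ each carry weight at least $2$, independently of $u$. Then:
\begin{itemize}
\item in $J-\{w\}$, $\{u,v\}$ carries at least $1$, since $f(N[v])\ge 1$;
\item in $J$, $\{u,v,w\}$ carries at least $2$, since if $f(v)+f(w)=1$ then $f(w)=1$ and $f(v)=0$, forcing $f(u)\ge 1$.
\end{itemize}
This yields the bounds $4$, $5$, $6$ directly, with no case analysis.
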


\begin{proposition}\label{Mis}
  Let $M_i$ with $i\in [1, 11]$ be the graphs shown in Figure \ref{fig}. Then
  $$\gamma_{\{R2\}} (M_i)=\left\lbrace\begin{array}{cl}
3, & \text{ for } i\in [2,6], \\
4, & \text{ for } i=1 \text{ or } i\in [7,9], \\
5, & \text{ for } i\in [10,11].
 \end{array}\right.$$
\end{proposition}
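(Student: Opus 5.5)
The plan is to treat each of the eleven graphs $M_i$ separately. For each one I establish a matching upper and lower bound, reading the edge sets off Figure~\ref{fig} with the base path labelled $p_0p_1p_2p_3p_4$ and the extra vertices $u,v,w,z$ as drawn.

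\textbf{Upper bounds.} For each $M_i$ I exhibit an explicit Roman $\{2\}$-dominating function of the claimed weight.
\begin{itemize}
\item For $i\in[2,6]$ I look for a vertex $x$ of large degree, typically $u$ or $v$, which misses only one vertex $y$ and is not adjacent to it. Assigning $2$ to $x$ and $1$ to $y$ gives weight $3$, as in the proof of Proposition~\ref{complement}. Where no such vertex exists, I use three vertices with value $1$ that together cover every other vertex twice.
\item For $M_1$, which is a spider-like tree with three legs of length two at $p_2$, the function with value $1$ on $p_0,p_4,v,p_2$ has weight $4$. It works because every support vertex then sees two $1$'s.
\item The analogous constructions handle $M_7,M_8,M_9$ with weight $4$, and $M_{10},M_{11}$ with weight $5$.
\end{itemize}

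\textbf{Lower bound $3$.} Every $M_i$ has $\gamma_{\{R2\}}\geq 3$, by the introductory remark that $\gamma_{\{R2\}}(G)=2$ holds only if $G$ has a universal vertex or two non-adjacent vertices adjacent to all others. Each $M_i$ contains the path $p_0\cdots p_4$ with $p_0,p_4$ far apart, so no universal vertex exists. I then check that no non-adjacent pair covers all remaining vertices; this check only involves the few high-degree vertices $u,v$.

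\textbf{Lower bounds $4$ and $5$ via pendant vertices.} Here I rely mainly on Proposition~\ref{stem}. If $M_i$ has $k$ pendant vertices with pairwise distinct supports, then $M_i$ is obtained from the graph with the pendants removed by adding pendants to a $k$-set. Proposition~\ref{stem} therefore yields $\gamma_{\{R2\}}(M_i)\geq k+1$.
\begin{itemize}
\item For $M_1$, the pendants $p_0,p_4,v$ give a bound of $4$.
\item For $M_{11}$, the pendants $p_0,p_4,w,z$ have distinct supports $p_1,p_3,v,u$, giving $5$ directly.
\item For $M_7,M_8$, the three pendants $p_4$ (or $p_0$), $w$ or $v$, and the remaining path end yield $4$.
\end{itemize}

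\textbf{Remaining cases: the main obstacle.} The cases where pendants alone do not give the bound are $M_9$ (bound $4$) and $M_{10}$ (bound $5$). For $M_{10}$ the pendants $p_0,p_4,w$ give only $4$, so I argue by contradiction.
\begin{itemize}
\item Suppose $f$ has weight $4$. As in the proof of Proposition~\ref{stem}, each closed neighbourhood $N[p_0],N[p_4],N[w]$ carries weight at least $1$, and these sets are disjoint. Hence exactly one unit of weight is free.
\item I then case on where the extra unit lies and whether each pendant neighbourhood is covered by a $2$ on the support or a $1$ on the pendant or support.
\item I show that the central vertex $p_2$, or the vertex $u$, cannot receive enough weight: $p_2$ is adjacent to $p_1,p_3,u,v$, and these are forced to hold at most one unit among the relevant neighbours.
\end{itemize}
For $M_9$ a similar disjoint-neighbourhood count works, using $N[p_0]$, $N[p_4]$ and a neighbourhood around $p_2$, followed by a short case check that weight $3$ fails. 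I expect this case analysis for $M_9$ and $M_{10}$ to be the only delicate part; everything else is direct verification.
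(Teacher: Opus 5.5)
\textbf{Gap: the lower bound for $M_7$.} This step fails because $M_7$ does not have three pendant vertices. In the figure, the vertex $v$ of $M_7$ is adjacent to $p_0,p_1,p_2,p_3$ and $u$, so $p_0$ has degree $2$. The only pendants are $p_4$ (support $p_3$) and $w$ (support $u$). Proposition~\ref{stem} therefore gives only $\gamma_{\{R2\}}(M_7)\geq 3$. Your claim that $M_9$ and $M_{10}$ are the only cases not settled by counting pendants is false: $M_7$ needs its own argument.

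The rest holds up. The pendant counts for $M_1$, $M_8$ and $M_{11}$ are correct. The weight-$3$ functions and the exclusion of weight $2$ for $M_2,\dots,M_6$ check out, and the case analyses you outline for $M_9$ and $M_{10}$ do close. The paper omits all these verifications, so there is no competing argument to compare with.

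\textbf{Repairing $M_7$.} A count of the same kind you planned for $M_9$ works. Suppose $f$ has weight $3$. The disjoint sets $N[p_4]=\{p_3,p_4\}$ and $N[w]=\{u,w\}$ each carry weight at least $1$, so at most one unit lies on $\{p_0,p_1,p_2,v\}$. Since $N(p_0)=\{p_1,v\}$, that unit must be $f(p_0)=1$. Hence $f(p_1)=f(p_2)=f(v)=0$, and both pendant neighbourhoods carry exactly $1$. A pendant of value $0$ would need a $2$ on its support, so $f(p_4)=f(w)=1$ and $f(p_3)=f(u)=0$. Now $p_3$ has value $0$, but its neighbours $p_2,p_4,u,v$ carry total weight $1$, a contradiction. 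Together with the function $f(v)=2$, $f(p_4)=f(w)=1$, this gives $\gamma_{\{R2\}}(M_7)=4$.

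\textbf{Smaller inaccuracies.} These do not break the argument but should be fixed.
\begin{enumerate}
\item ``$p_0,p_4$ far apart'' does not by itself exclude a universal vertex: in $M_2$ the vertex $v$ sees the whole path. What matters there is that $v$ is not adjacent to $u$.
\item In $M_{10}$ the failing vertex is not always $p_2$ or $u$. For example, $f(p_3)=2$, $f(p_0)=f(w)=1$ dominates both of them, and it is $p_1$ and $v$ that fail.
\item In $M_9$ no set around $p_2$ disjoint from $N[p_0]\cup N[p_4]$ is forced to carry weight: the optimal function $f(p_1)=f(p_3)=2$ vanishes on $\{p_2,u,v\}$. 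So beyond $N[p_0]$ and $N[p_4]$, your case check has to do all the work.
\end{enumerate}
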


\medskip

\begin{proposition}
If $G$ is one of $P_k$ for $k\geq 6$, $J$, $K$, $L$, $Q$, $M_i$ with $i\neq 5$ or $P_5^u$ with $\delta(P_5^u)= 1$, 
then $\gamma_{\{R2\}}(\overline{G})=3$.  
\end{proposition}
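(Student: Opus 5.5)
The plan is to reduce every case to Proposition~\ref{complement}. That result says $\gamma_{\{R2\}}(\overline{G})=3$ whenever $G$ is connected, $|V(G)|\geq 3$ and $\delta(G)=1$. So for each graph $G$ in the list it suffices to check three things: $G$ is connected, $G$ has at least three vertices, and $G$ has a pendant vertex (no isolated vertex can occur once connectivity holds). All listed graphs have at least $6$ vertices, so the size condition is immediate. Connectivity is also immediate from Figures~\ref{figJKLQ} and~\ref{fig}, because each graph contains a spanning path $P_5$ or $P_6$ plus extra vertices attached to it, or is itself a path.

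The work is to exhibit, for each graph, a vertex of degree one. I will use the labelling of the figures, with the horizontal path vertices numbered $0,1,\dots$ from left to right.
\begin{itemize}
\item $P_k$, $k\geq 6$: either endpoint.
\item $J$: vertex $w$, or the left end $0$ of the path.
\item $K$: vertex $0$ is adjacent to $u$, but the right end $5$ has degree one.
\item $L$ and $Q$: the left end $0$.
\item $M_1$ and $M_3$: vertex $v$.
\item $M_2$: the path ends are adjacent to $v$, but $u$ is adjacent only to the central vertex $2$.
\item $M_4$ and $M_6$: the right end $4$.
\item $M_7$: vertex $w$.
\item $M_8$: vertex $v$.
\item $M_9$, $M_{10}$ and $M_{11}$: the left end $0$.
\end{itemize}
It is worth noting why $M_5$ is excluded. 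There $v$ is adjacent to $0,\dots,3$ and $u$ to $1,\dots,4$, so $\delta(M_5)=2$ and the argument does not apply.

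For $P_5^u$ the hypothesis $\delta(P_5^u)=1$ already provides the pendant vertex. It remains to see connectivity. If $u$ had no neighbour on the $P_5$, then $u$ would be isolated and $\delta(P_5^u)=0$. Hence $u$ is adjacent to some vertex of the path, and $P_5^u$ is connected.

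With these verifications, Proposition~\ref{complement} yields $\gamma_{\{R2\}}(\overline{G})=3$ in every case. The only real obstacle is reading the adjacencies of the eleven $M_i$ correctly from Figure~\ref{fig}, above all $M_2$, $M_4$ and $M_5$, where extra edges to the path ends could destroy every pendant vertex. I would double-check each of these against the drawing before concluding.
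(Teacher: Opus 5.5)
Your proposal is correct and takes the route the paper evidently intends: the paper omits the proof, but the statement lists exactly those ZOO graphs to which Proposition~\ref{complement} applies (connected, at least three vertices, a pendant vertex). Your identification of a pendant vertex in each graph, the reason $M_5$ is excluded ($\delta(M_5)=2$), and the connectivity argument for $P_5^u$ all check out against the figures.
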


\subsection{ Prime split $\{H_1,H_2,\overline{H_1},\overline{H_2}\}$-free graphs}

In this section, we make use of the characterization given in \cite{roussel1999} (stated in Lemma \ref{split} below) of the subclass of prime split graphs that have neither $H_1$, $H_2$ nor their complements as induced subgraphs (see below Figure \ref{Hs}).

\begin{figure*}[h]
\centering
\begin{tikzpicture}[scale=0.85]
  \begin{scope}[shift={(-5.5,0)}]
    \node () at (1.05,-1.2){\small $H_1$};
      
      \foreach \x in {0,1,2,3}{
      \node[inner sep=0pt] (\x) at (\x*0.7,0) {}; }
      
       \foreach \x in {0,1,2}{
      \draw[thick] (\x) -- (\x*0.7+0.7,0);}

      \node[inner sep=0] (4) at (0.1,0.6) {}; 
      \node[inner sep=0] (5) at (2,0.6) {}; 
        
      \draw[thick] (1) -- (4);
      \draw[thick] (5) -- (2);
    
      \foreach \x in {0,1,2,3,4,5}{
      \filldraw (\x) circle (1.6pt); }
   \end{scope}
  \begin{scope}[shift={(-2,0)}]
    \node () at (1.05,-1.2){\small $H_2$};
      
      \foreach \x in {0,1,2,3}{
      \node[inner sep=0pt] (\x) at (\x*0.7,0) {}; }
      
       \foreach \x in {0,1,2}{
      \draw[thick] (\x) -- (\x*0.7+0.7,0);}

      \node[inner sep=0] (4) at (0.1,0.6) {}; 
      \node[inner sep=0] (5) at (0.1,-0.6) {}; 
        
      \draw[thick] (1) -- (4);
      \draw[thick] (5) -- (1);
    
      \foreach \x in {0,1,2,3,4,5}{
      \filldraw (\x) circle (1.6pt); }
    \end{scope} 
    \begin{scope}[shift={(2,0)}]
       \node () at (0,-1.2){\small $\overline{H_1}$};
    \foreach \x in {0,1,2,3,4,5}{
      \node[inner sep=0pt] (\x) at (\x*60:0.7) {}; 
      \draw[thick] (\x) -- (\x*60+60:0.7);
      \filldraw (\x) circle (1.6pt);}

      \draw[thick] (1) -- (4);
      \draw[thick] (1) -- (5);
      \draw[thick] (2) -- (4);
      \draw[thick] (2) -- (5);
       
    \end{scope}
    \begin{scope}[shift={(5,0)}]
       \node () at (0,-1.2){\small $\overline{H_2}$};
    \foreach \x in {0,1,2,3,4,5}{
      \node[inner sep=0pt] (\x) at (\x*60:0.7) {}; 
      \filldraw (\x) circle (1.6pt);}
       \foreach \x in {1,2,3,4,5}{
      \draw[thick] (\x) -- (\x*60+60:0.7);}

      \draw[thick] (1) -- (3);
      \draw[thick] (1) -- (4);
      \draw[thick] (1) -- (5);
      \draw[thick] (2) -- (4);
      \draw[thick] (2) -- (5);
       
    \end{scope}
     
\end{tikzpicture}
\caption{Graphs $H_1$, $H_2$ and their complements}\label{Hs}
\end{figure*}

For a graph $G=(V,E)$, a set of vertices $M\subseteq V$ is a \emph{module} if every vertex in $V\setminus M$ is either adjacent to every vertex in $M$, or to none of them. A graph with at least 2 vertices is \emph{prime} is it only has trivial modules  ($\emptyset,\, V$ or a singleton).

Let $G=(S, K )$ be a split graph, where $S$ is a stable set, and $K$ is a clique. The following notation and result (Lemma \ref{split}) are presented in \cite{roussel1999}. Let $\ell$ be the maximum degree in $G$ of the vertices of $S$ and, for any $i\in [1, \ell]$ denote

$A_i=\{s\in S\;\colon \; deg_G(s)=i\}$, \;
$B_i=\bigcap\limits_{s\in A_i} N(s)$, \; $C_i=\bigcup\limits_{s\in A_i} N(s)$ \; and \; 
$C_i'=\bigcup\limits_{j=1}^{i-2} C_j$.

Also, consider the following definitions:
$A_i$ is a \emph{thin-stable} if $|B_i|\geq i-1$ and a \emph{thick-stable} if $|C_i|\leq i+1$.

\begin{remark}[\cite{roussel1999}]\label{remark 1}
  For each $i \in [1, \ell]$, notice that $A_i$ is a thick-stable or a thin-stable if and only if for any $x,y \in A_i$ with $x\neq y$, $|N(x)\cap N(y)|=i-1$. 

\end{remark}

A generalization of Remark \ref{remark 1} will be useful for the upcoming results: 

\begin{remark}\label{general}
Let $i \in [1, \ell]$ and  $A_i=\{ s_1, s_2,\ldots,s_{|A_i|}\}$. $A_i$ is a thick-stable or a thin-stable if and only if for any $|\bigcap_{j=1}^{k+1} N(s_j)|=i-k$, for each $ k \in [1, i-1]$.
\end{remark}

\begin{lemma}[\cite{roussel1999}]\label{split}
  The prime split graph $G=(S, K)$ is $\{H_1,H_2,\overline{H_1},\overline{H_2}\}$-free if and only if for all $i\in [1, \ell]$, the following statements hold:
  \begin{enumerate}
    \item $A_i$ is a thin-stable or a thick-stable.
    \item $C_i'\subseteq B_i$.
    \item If $|A_i|\geq 4$ and $A_i$ is a thick-stable then $i=\ell$. Moreover, if $A_{i-1}\neq \emptyset$ then $|A_{i-1}|\leq 3$, $C_{i-1}\subseteq C_i$ and $A_i\cup A_{i-1}\cup K$ is $\{H_1,H_2\}$-free.
    \item If $|A_{i-1}|\geq 4$ and $A_{i-1}$ is a thin-stable then $i=2$, $B_{i-1}\subseteq B_i$ and $A_i\cup A_{i-1}\cup K$ is $\{H_1,H_2\}$-free.
    \item If $|A_i|<4$ and $|A_{i-1}|<4$ then for all $s\in A_i$, $s$ does not belong to a subgraph of $A_i\cup A_{i-1}\cup S_i\cup K$ isomorphic to $H_1$ or $H_2$, and for all $s' \in A_{i-1}$, $|N(s')\setminus N(s)|\leq 1$.
  \end{enumerate}
\end{lemma}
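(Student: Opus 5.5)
The statement is an equivalence, and I would prove the two directions separately. Both rest on one observation: each of $H_1,H_2,\overline{H_1},\overline{H_2}$ is itself a split graph on six vertices. $H_1$ has clique $\{b,c\}$, two pendant stable vertices at $b$ and two at $c$. $H_2$ has clique $\{b,c\}$, three stable vertices pendant at $b$ and one at $c$. The complements come from swapping the roles of clique and stable set. Consequently, in a split graph $G=(S,K)$, an induced copy of one of these graphs places its clique part inside $K$ (up to at most one vertex of $S$) and its stable part inside $S$ (up to at most one vertex of $K$). Each copy is therefore described by a few vertices of $S$ together with prescribed adjacency and non-adjacency patterns to a few vertices of $K$. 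I would first record this ``pattern'' description for all four graphs.

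\emph{Necessity.} For each condition 1--5 I would show that its failure produces a forbidden subgraph.
\begin{enumerate}
\item If condition 1 fails, then by Remark \ref{remark 1} there are $x,y\in A_i$ with $|N(x)\cap N(y)|\le i-2$. Since $\deg(x)=\deg(y)=i$, each of $x,y$ has at least two private neighbours in $K$, say $a,a'$ and $b,b'$. The set $\{x,y,a,a',b,b'\}$ then induces the complement of $H_1$. Indeed, in the complement $xy$ is an edge, $x$ sees $b,b'$, $y$ sees $a,a'$, and $\{a,a',b,b'\}$ is stable.
\item If condition 2 fails, there are $s\in A_i$, $s'\in A_j$ with $j\le i-2$, and some $c\in N(s')\setminus N(s)$. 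Since $\deg(s)\ge \deg(s')+2$, $s$ has at least two neighbours outside $N(s')$. Using these, one vertex of $N(s)\cap N(s')$ or a further stable vertex, and primality (which rules out twins and nontrivial modules), I would extract $H_2$ or $\overline{H_2}$.
\item For conditions 3 and 4, the ``large'' case $|A_i|\ge 4$ or $|A_{i-1}|\ge 4$ lets me choose four stable vertices with controlled pairwise intersections via Remark \ref{general}. A thick-stable that is not at the top level, or a second level that does not nest correctly, then yields $H_1$ or $H_2$ by picking one vertex from a higher level.
\item Condition 5 should be handled directly: its violation is essentially already phrased as containing $H_1$/$H_2$, or as a vertex $s'$ with two private neighbours relative to $s$, which gives the same pattern as in the first item.
\end{enumerate}

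\emph{Sufficiency.} Assume conditions 1--5 hold and suppose $G$ contains an induced copy $F$ of one of the four graphs. Let the stable vertices of $F$ that lie in $S$ have degrees $i_1\le i_2\le\dots$ in $G$. I would split into cases according to how many distinct levels $A_i$ these vertices occupy.
\begin{itemize}
\item If two of them lie in the same $A_i$, condition 1 and Remark \ref{general} pin down their common neighbourhood. Checking the pattern of $F$ then gives a contradiction, except when $|A_i|\ge 4$; there conditions 3 and 4 apply.
\item If they lie in levels differing by at least $2$, condition 2 ($C'_i\subseteq B_i$) forces nested neighbourhoods, which are incompatible with $F$'s private-neighbour pattern.
\item If they lie in consecutive levels $i-1,i$, conditions 3--5 are exactly the hypotheses that exclude $F$. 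Here I use that $|N(s')\setminus N(s)|\le 1$, or that the union $A_i\cup A_{i-1}\cup K$ is $\{H_1,H_2\}$-free.
\end{itemize}
The complements $\overline{H_1},\overline{H_2}$ are handled by the same argument applied to $\overline{G}$. Note that $\overline{G}$ is split with $S$ and $K$ exchanged, and I would track how degrees transform: a vertex of degree $i$ in $S$ has $|K|-i$ non-neighbours.

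The main obstacle will be the sufficiency case analysis for consecutive levels, together with the translation to complements. The conditions there are asymmetric (thin versus thick, and $i=\ell$ versus $i=2$), so the bookkeeping is error-prone. I would first try to reduce it using the self-complementarity of the hypothesis set, hoping that thin-stables of $G$ correspond to thick-stables of $\overline{G}$ so that conditions 3 and 4 are swapped, and then treat only $H_1$ and $H_2$ explicitly.
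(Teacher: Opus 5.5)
The paper does not prove this lemma (it is quoted from \cite{roussel1999}), so your plan has to stand on its own. As it stands, it handles only the easy clauses.

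Your arguments for condition 1 and for the last clause of condition 5 are correct. Condition 2 is simpler than you make it: since $c\in N(s')\setminus N(s)$, you have $|N(s)\cap N(s')|\le j-1$. So $s$ has at least $i-j+1\geq 3$ private neighbours, and three of them together with $s,s',c$ induce $\overline{H_2}$, with no use of primality.

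\textbf{Necessity of 3 and 4.} The genuine gap is here. You only assert the structural clauses ($i=\ell$, $|A_{i-1}|\le 3$, $C_{i-1}\subseteq C_i$, $i=2$, $B_{i-1}\subseteq B_i$). Your mechanism, a vertex $t$ from a higher level producing $H_1$ or $H_2$, breaks down when $t$ is complete to $C_i$, and then there may be no forbidden subgraph at all. Take the thick spider with spider partition $(S',C,\{t\})$ and $r=4$. It is prime, split and free of all four graphs. Yet for the split partition $S=S'\cup\{t\}$, $K=C$ one gets $A_3=S'$, a thick-stable of size $4$, and $\ell=4$, so condition 3 fails.

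So the statement tacitly needs a normalised partition: no vertex of $S$ complete to $K$, i.e.\ $K$ a maximum clique. Your argument never uses this. Even with it, the contradiction must come from primality. A vertex of $S$ with a neighbour outside $C_i$ and a non-neighbour in $C_i$ yields $H_2$ together with three vertices of $A_i$. Once that is excluded, $(K\setminus C_i)\cup\{z\in S: N(z)\not\subseteq C_i\}$ is a module, and it is primality, with the normalisation, that rules this configuration out.

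Similarly for 4: the witness is a vertex of $S$ missing some vertex of the core $B_{i-1}$. It exists because that core vertex is not universal, and freeness forces it to a \emph{lower} level, not a higher one.

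\textbf{Sufficiency.} The sketch is also not sound as organised. Label an induced $H_1$ by clique vertices $u,v$, with $x_1,x_2$ pendant at $u$ and $x_3,x_4$ pendant at $v$. Two of these in the same $A_i$ give no contradiction: $x_1$ and $x_3$ can perfectly well satisfy $|N(x_1)\cap N(x_3)|=i-1$, with private neighbours just $u$ and $v$.

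What condition 2 really gives is that stable vertices of the copy attached to different clique vertices have degrees differing by at most one. This still lets the four stable vertices of an $H_1$ or $H_2$ occupy three consecutive levels, e.g.\ $x_1\in A_{i-1}$, $x_2\in A_{i+1}$, $x_3,x_4\in A_i$. Your bullet on levels $i-1,i$ ignores this case. The $\{H_1,H_2\}$-freeness clauses of 3 and 4 concern only $A_i\cup A_{i-1}\cup K$, so they cannot reach it. The natural clause to exclude it is the one involving $S_i$ in condition 5, a set the statement never defines and which you would have to pin down first.

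\textbf{Complement cases.} Reducing $\overline{H_1},\overline{H_2}$ to $\overline{G}$ is circular. The levels of $\overline{G}$ partition $K$ (by $|S|-|N_S(u)|$), not $S$, so there is no set-by-set exchange of thin- and thick-stables. Proving that conditions 1--5 for $G$ imply them for $\overline{G}$ is essentially the lemma itself. Treat these cases directly instead: an induced $\overline{H_1}$ or $\overline{H_2}$ is just a pair $p,q\in S$ with at least $(2,2)$ or $(1,3)$ private neighbours. That is what the pairwise clauses of conditions 1, 2, 3 and 5 are designed to control.
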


From Lemma \ref{split}, we derive the following assertions.

\begin{corollary}\label{remark 2} From Lemma \ref{split} holds:
  
\begin{itemize}
  \item[i.] If $|A_i|\geq 4$ then $i=1$ or $i=\ell$, and then $|A_i|\leq 3$ for $i \in [2,\ell-1]$.
  \item[ii.] $B_j\subseteq C_j\subseteq B_i$ for $j \in [1, i-2]$ and $i \in [3, \ell]$.
  \item[iii.] If $B_j=\emptyset$ for some $j\geq 3$, then $C_j'=\emptyset$ and $A_i\subseteq C_i=\emptyset$ for each $i\in [1, j-2]$. 
  \item[iv.] For $i \in [2, \ell]$, $B_i = \emptyset$ if and only if $|A_i|=i+1$ or $A_i= \emptyset$. Even more, if $|A_i|=i+1$ then every vertex in $C_i$ is adjacent to exactly $i$ vertices in $A_i$.
  \item[v.] If $|A_i|<4$, $|A_{i+1}|<4$ and \begin{itemize}
    \item[-] $C_i\cap B_{i+1}=\emptyset$ for $i=2,3$ or
    \item[-] $B_i\cap B_{i+1}=\emptyset$ for $i \in [4, \ell-1]$,
  \end{itemize}
  then $C_i\subseteq C_{i+1}$ and every vertex in $B_i$ is not adjacent to at most one vertex in $A_{i+1}$.
  \item[vi.] If $\ell>2$, then $A_1$, $A_2$ and $A_{\ell}$ are the only non-empty sets among the $A_i$'s with $B_i=\emptyset$.
\end{itemize}
\end{corollary}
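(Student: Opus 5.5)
Each of the six items is a direct consequence of Lemma \ref{split} together with Remark \ref{general}, so I will treat them one at a time, using only the definitions of $A_i$, $B_i$, $C_i$, $C_i'$ and of thin-/thick-stables. Primality will be used in two ways. No two vertices of $S$ are twins, so distinct vertices of $A_i$ have distinct neighbourhoods. No vertex of $K$ is a false twin of another, which rules out degenerate configurations.

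\textbf{Items i and ii.} For item i, suppose $|A_i|\ge 4$. If $A_i$ is thick-stable, item 3 of the lemma forces $i=\ell$. If $A_i$ is thin-stable, apply item 4 of the lemma with index $i+1$ (i.e.\ $A_{(i+1)-1}=A_i$): it gives $i+1=2$, so $i=1$. The bound $|A_i|\le 3$ for $2\le i\le\ell-1$ follows. For item ii, note first that $B_j\subseteq C_j$ holds trivially when $A_j\neq\emptyset$. Item 2 of the lemma gives $C_i'\subseteq B_i$, and $C_j\subseteq C_i'$ for $j\le i-2$ by the definition of $C_i'$.

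\textbf{Items iii and iv.} For item iii, $C_j'\subseteq B_j=\emptyset$ forces every $C_i$ with $i\le j-2$ to be empty. Since a vertex of $A_i$ has $i\ge1$ neighbours, all of which lie in $C_i$, we get $A_i=\emptyset$. For item iv, take $A_i\neq\emptyset$ with $i\ge 2$. If $|A_i|=1$, then $B_i=N(s)$ has $i$ elements, so $B_i\neq\emptyset$. Otherwise Remark \ref{general} gives $|\bigcap_{j\le k+1}N(s_j)|=i-k$. If $A_i$ is thin-stable, then $|B_i|\ge i-1\ge1$, so $B_i\neq\emptyset$; here I need the convention that $|A_i|\le i$ makes the intersection count well defined. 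If $A_i$ is thick-stable, the neighbourhoods are distinct $i$-subsets of a set of size at most $i+1$, hence exactly $i+1$ in number inside $C_i$ with $|C_i|=i+1$, when $B_i$ is empty. Each vertex of $C_i$ is then missed by exactly one neighbourhood, so it is adjacent to exactly $i$ vertices of $A_i$. The converse is a counting argument: $i+1$ distinct $i$-subsets of an $(i+1)$-set have empty intersection.

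\textbf{Items v and vi.} For item v, apply item 5 of the lemma with index $i+1$. Every $s'\in A_i$ misses at most one neighbour of each $s\in A_{i+1}$, i.e.\ $|N(s')\setminus N(s)|\le1$. Combined with the disjointness hypothesis and a degree count, this yields $C_i\subseteq C_{i+1}$ and the statement about vertices of $B_i$. For item vi, use item iii contrapositively together with item ii: for $3\le i<\ell$ with $A_i\ne\emptyset$, the set $C_i\subseteq B_{i+2}$ and the thin-stable bound show $B_i\ne\emptyset$. The case $i=\ell$ is allowed by item iv.

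I expect item v to be the main obstacle. Its hypotheses, with their split at $i=2,3$ versus $i\ge4$, depend on delicate configurations excluded by $H_1$ and $H_2$. I would first try to derive it by contradiction: build an explicit induced $H_1$ or $H_2$ from a vertex of $C_i\setminus C_{i+1}$, two vertices of $A_{i+1}$, and a common neighbour.
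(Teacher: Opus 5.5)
Items i--iii and the forward half of item iv are fine. The proof breaks at the converse of item iv, and no argument can repair it, because that half of the statement is false.

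\textbf{The converse of item iv.} Your counting step (``$i+1$ distinct $i$-subsets of an $(i+1)$-set have empty intersection'') silently assumes $|C_i|=i+1$, that is, that $A_i$ is thick-stable. But $|A_i|=i+1$ is compatible with $A_i$ being thin-stable. In that case primality forces $|B_i|=i-1\ge 1$ and $|C_i|=2i$. Item i excludes this configuration for $3\le i\le\ell-1$, but not for $i=2$, where $|A_2|=3<4$.

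Here is a concrete counterexample. Let $K=\{b,c_1,c_2,c_3\}$ and $S=\{s_1,s_2,s_3,t\}$, with $N(s_j)=\{b,c_j\}$ and $N(t)=\{c_1,c_2,c_3\}$.
\begin{itemize}
\item The module generated by any two vertices is all of $V$, so this split graph is prime.
\item It is $\{H_1,H_2,\overline{H_1},\overline{H_2}\}$-free. In a split graph, an induced $H_1$ or $H_2$ needs two clique vertices with $2+2$ or $3+1$ private stable neighbours. An induced $\overline{H_1}$ or $\overline{H_2}$ needs two stable vertices with $2+2$ or $1+3$ private clique neighbours. Here the best splits are $2+1$ (for $b,c_j$) and $1+2$ (for $s_j,t$).
\item Yet $|A_2|=3$ while $B_2=\{b\}\neq\emptyset$. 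Moreover $b$ has three neighbours in $A_2$, whereas each $c_j$ has only one.
\end{itemize}
So both the ``if'' direction and the ``even more'' clause of item iv fail. What your argument does prove is the corrected statement: if $A_i\neq\emptyset$ and $B_i=\emptyset$, then $A_i$ is thick-stable with $|A_i|=|C_i|=i+1$, and each vertex of $C_i$ has exactly $i$ neighbours in $A_i$.

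\textbf{Item v.} This item is asserted rather than proved: ``the disjointness hypothesis and a degree count'' carries all the content. Your gloss of condition 5 is also inverted. The inequality $|N(s')\setminus N(s)|\le 1$ says that at most one neighbour of $s'$ lies outside $N(s)$, not that $s'$ misses at most one neighbour of $s$.

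The missing argument is short, assuming $A_{i+1}\neq\emptyset$ (otherwise condition 5 is vacuous).
\begin{itemize}
\item First conclusion. Suppose $c\in N(s')\setminus C_{i+1}$ with $s'\in A_i$. Condition 5 then gives $N(s')\setminus\{c\}\subseteq B_{i+1}$. This set is nonempty and lies in $C_i$, which contradicts the hypothesis for $i=2,3$. For $i\ge 4$, the bound $|A_i|\le 3$ together with Remark~\ref{remark 1} gives $|B_i|\ge i-2$. Hence $B_i$ meets any $(i-1)$-subset of $N(s')$, again a contradiction.
\item Second conclusion. Suppose $b\in B_i$ misses two vertices $s_1,s_2\in A_{i+1}$. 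Then $N(s')\setminus\{b\}\subseteq N(s_1)\cap N(s_2)$ for every $s'\in A_i$. If $|A_{i+1}|=2$ or $A_{i+1}$ is thin-stable, this intersection is $B_{i+1}$, and the same contradiction follows. If $A_{i+1}$ is thick-stable of size $3$, two distinct $(i+1)$-subsets of $C_{i+1}$ cannot both omit $b$. If instead $b\notin C_{i+1}$, then again $N(s')\setminus\{b\}\subseteq B_{i+1}$.
\end{itemize}

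\textbf{Item vi.} The inclusion $C_i\subseteq B_{i+2}$ says nothing about $B_i$, and it also needs $i+2\le\ell$. Instead, combine item i, which gives $|A_i|\le 3$ for $2\le i\le\ell-1$, with the valid half of item iv. For $i\ge 3$, $A_i\neq\emptyset=B_i$ would force $|A_i|=i+1\ge 4$, which is impossible.
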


The previous results allow to study the domination and Roman $\{2\}$-domination numbers of prime split $\{H_1,H_2,\overline{H_1},\overline{H_2}\}$-free graphs. 

\begin{theorem}\label{gamasplit}
    Let $G=(S, K)$ be a prime split $\{H_1,H_2,\overline{H_1},\overline{H_2}\}$-free graph, $\ell$ and the sets $A_i$, $B_i$ and $C_i$ for $i \in [1,\ell]$ be defined as above, and $i_{min}=\min\limits_{B_i\neq\emptyset}\{i¨\in [1,\ell] \}$. Then $|A_1| \leq \gamma (G) \leq |A_1|+2$.
\end{theorem}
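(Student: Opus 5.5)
We prove the two inequalities separately, working throughout with a split partition $G=(S,K)$ and the sets $A_i$, $B_i$, $C_i$ from Lemma~\ref{split} and Corollary~\ref{remark 2}.

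\emph{Lower bound.} Every vertex $s\in A_1$ is a pendant vertex whose unique neighbour lies in $K$. Since $G$ is prime, two distinct vertices of $A_1$ cannot share their neighbour; otherwise they would be false twins and form a nontrivial module. Hence the closed neighbourhoods $N[s]$, $s\in A_1$, are pairwise disjoint. Any dominating set must meet each of them, so $\gamma(G)\geq |A_1|$. This is the same counting used in Proposition~\ref{stem}.

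\emph{Upper bound.} We build a dominating set of size at most $|A_1|+2$. Start with $D_0=C_1$, the set of supports of the pendant vertices; then $|C_1|=|A_1|$ and $D_0$ dominates $A_1$. If $K\neq\emptyset$, also dominate $K$ by adding one clique vertex when $C_1=\emptyset$. It remains to dominate $S\setminus A_1=\bigcup_{i\geq 2}A_i$, and we want at most two extra vertices for this. The key tool is Corollary~\ref{remark 2}\,ii, which gives $C_j\subseteq B_i$ for $j\leq i-2$. Hence, if $A_1\neq\emptyset$, every vertex of $C_1$ lies in $B_i$ for all $i\geq 3$, so a single vertex $c\in C_1\subseteq D_0$ is adjacent to every vertex of $A_3\cup\cdots\cup A_\ell$. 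Only $A_2$ can be left undominated. For $A_2$ we use $C_1\subseteq B_3$ together with Lemma~\ref{split}, items 1 and 4 or 5, depending on whether $|A_2|\geq 4$. Either $A_2$ is thin, and one vertex of $B_2$ dominates it; or $A_2$ is thick with $|C_2|\leq 3$, and two vertices of $C_2$ dominate it. In both cases at most two vertices are added, giving $\gamma(G)\leq |A_1|+2$.

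If $A_1=\emptyset$, we instead use $i_{\min}$, the least index with $B_i\neq\emptyset$. Corollary~\ref{remark 2}\,iii and vi show that the indices $i$ with $B_i=\emptyset$ are essentially only $1$, $2$ and $\ell$. A vertex of $B_{i_{\min}}$, or a vertex of $C_j$ for small $j$, then lies in $B_i$ for all larger $i$ except possibly $i=\ell$ and $i=i_{\min}+1$. We cover the leftover classes with at most one more vertex, using the thick/thin dichotomy and item iv of Corollary~\ref{remark 2}: when $B_i=\emptyset$ we have $|A_i|=i+1$, and every vertex of $C_i$ sees $i$ of them, so two vertices of $C_i$ suffice.

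The main obstacle is the case analysis for $A_2$ and $A_\ell$ when these classes are thick-stable with empty $B$. Here we must check that the at most two extra vertices chosen for one class also take care of the other. We expect to handle this by choosing the two extra vertices inside $C_\ell$ and then invoking Corollary~\ref{remark 2}\,v, which gives $C_i\subseteq C_{i+1}$ together with the bound that each vertex misses at most one vertex of the next class, to show that the remaining classes are absorbed.
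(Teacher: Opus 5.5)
Your lower bound and your treatment of the case $A_1\neq\emptyset$ are correct and follow the paper. Since $C_1\subseteq B_i$ for every $i\geq 3$ (item 2 of Lemma~\ref{split}), $C_1$ dominates $K\cup A_1\cup A_3\cup\dots\cup A_\ell$. Then one vertex of $B_2$ (thin case) or any two vertices of $C_2$ (thick case, $|C_2|\leq 3$) handle $A_2$; item 1 alone suffices here, and items 4 and 5 are not needed. Your primality argument, that two pendant vertices with a common support would be false twins, justifies $\gamma(G)\geq |A_1|$ more cleanly than the paper's appeal to the lemma.

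The gap is the case $A_1=\emptyset$, where you must exhibit a dominating set of size $2$. You only describe a plan and explicitly defer the decisive step (\emph{``we expect to handle this''}). The sketch is also inaccurate as written.
\begin{itemize}
\item Your budget does not add up: you spend one vertex on $K$ plus ``at most one more'', but then invoke ``two vertices of $C_i$''.
\item $i=\ell$ is not an exceptional index. By Corollary~\ref{remark 2}\,ii, a vertex $u\in B_{i_{\min}}\subseteq C_{i_{\min}}$ lies in $B_i$ for every $i\geq i_{\min}+2$. So $u$ dominates everything except $A_{i_{\min}+1}$ and, by item vi, possibly $A_2$ (nonempty with $B_2=\emptyset$, hence $|A_2|=3$).
\end{itemize}

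What is missing is an argument that a single pair covers these leftover classes simultaneously. The paper supplies it by a case analysis on $i_{\min}$:
\begin{itemize}
\item When $i_{\min}=3$, it takes two vertices of $C_2$, which lie in $B_i$ for $i\geq 4$. It then uses $C_2\subseteq C_3$ (item v of Corollary~\ref{remark 2}) together with item 5 of Lemma~\ref{split} to cover $A_3$.
\item When $i_{\min}=\ell-1$, either $B_{i_{\min}}\cap B_{i_{\min}+1}$ contains a universal vertex, or $u\in B_{i_{\min}}$ misses at most one vertex $s^*$ of $A_{i_{\min}+1}$ (item v). In the latter case $u$ together with a neighbour of $s^*$ is used.
\item When every $B_i$ is empty, two vertices of $C_\ell$ are taken. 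This works because $A_\ell$ is then thick-stable, so each of its vertices misses at most one vertex of $C_\ell$.
\item In the remaining cases, two suitable vertices of $C_2$ are used.
\end{itemize}
Until you carry out such a verification, your proof establishes $\gamma(G)\leq |A_1|+2$ only when $A_1\neq\emptyset$.
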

\begin{proof}
From Lemma \ref{split} $A_1$ is a subset of pendant vertices whose neigbourhoods are pairwise disjoint, then $\gamma(G)\geq |A_1|$. 

From ii and iv in Corollary \ref{remark 2}, if $A_2=\emptyset$ then $C_1\subseteq B_j$ for every $j\geq 3$, and thus $C_1$ is a dominating set in $G$. In this case, we have  $\gamma\leq |C_1|=|A_1|$.  If not ($A_2\neq\emptyset $), from Corollary \ref{general} there are two vertices in $C_2$, let's say $c_1$ and $c_2$, such that every vertex in $A_2$ is adjacent to either one of them. Then, $C_1\cup \{c_1,c_2\}$ is a dominating set in $G$ and thus $\gamma (G) \leq |C_1|+2=|A_1|+2$.

Now, let us assume $C_1=\emptyset$ (i.e. $A_1=\emptyset)$. We will divide the study into the different possible values  of $i_{min}$. 

If $i_{min}\neq 3$ or $i_{min}= \ell$, then $C_2\subseteq B_{i_{min}}$ and  the previous $\{c_1,c_2\}\subseteq C_2$ is a dominating set in $G$.

If  $i_{min}= 3$, clearly if $C_2\cap B_3\neq \emptyset$. Otherwise, from v. in  Remark \ref{general} $C_2\subseteq C_3$ and from $5.$ in Lemma \ref{split}, there is a subset of  $C_2$ with two vertices that again is a dominating set in $G$.

If $i_{min}= \ell-1$, $|A_{i_{min}}|<4$ and $|A_{i_{min}+1}|<4$. Then if $B_{i_{min}}\cap B_{i_{min}+1}\neq \emptyset$ there is a universal vertex in $G$. Otherwise, $C_{i_{min}}\subseteq  C_{i_{min}+1}$ and then, from v. in Remark  \ref{general}, we can build a dominating set in $G$ by considering a vertex in $B_{i_{min}}$ along with a neighbour from the non adjacent vertex in $A_{i_{min}+1}$.

Finally, if $B_i=\emptyset$ for every $i\in [1 \ell]$, then $C_i=\emptyset$ for every $i\in [1, \ell-1]$. Then there exist two vertices in $C_\ell$ forming a dominating set in  $G$.

In all the cases with $C_1=\emptyset$, we have built a dominating set in $G$ of size 2, implying $\gamma(G)\leq 2$.
\end{proof}

\begin{theorem}\label{gamaR2split}
   Let $G=(S, K)$ be a prime split \newline $\{H_1,H_2,\overline{H_1},\overline{H_2}\}$-free graph, $\ell$ and the sets $A_i$, $B_i$ and $C_i$ for $i \in [1,\ell]$ be defined as above, and $i_{min}=\min\limits_{B_i\neq\emptyset}\{i¨\in [1, \ell] \}$. Then   
$|A_1|+1 \leq \gamma_{\{R2\}} (G) \leq |A_1|+3$.
\end{theorem}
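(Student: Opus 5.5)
The bounds will follow by pairing Theorem \ref{gamasplit} with two standard tools. For the upper bound I use the universal inequality $\gamma_{\{R2\}}(G)\le 2\gamma(G)$, refined by the dominating sets built in the proof of Theorem \ref{gamasplit}. For the lower bound I use Proposition \ref{stem}, applied to the pendant vertices in $A_1$.

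\textbf{Lower bound.} By Lemma \ref{split}, $A_1$ is a thin-stable or a thick-stable, and primality rules out two pendant vertices sharing a neighbour. Hence the vertices of $A_1$ are pendant with pairwise distinct neighbours, so $C_1$ is a set of $|A_1|$ vertices of $K$, each carrying its own pendant vertex. Thus $G$ is obtained from $G\setminus A_1$ by adding a pendant vertex to each vertex of $C_1$, and Proposition \ref{stem} gives $\gamma_{\{R2\}}(G)\ge |C_1|+1=|A_1|+1$. If $A_1=\emptyset$, the bound $\gamma_{\{R2\}}(G)\ge 1$ is immediate, since $G$ has at least two vertices.

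\textbf{Upper bound.} Take the dominating set $D=C_1\cup D'$ built in the proof of Theorem \ref{gamasplit}, where $D'\subseteq K$ has at most two vertices ($D'=\emptyset$ when $A_2=\emptyset$, and otherwise $D'=\{c_1,c_2\}$ or its analogue in the other cases). Assigning $2$ to every vertex of $D$ would only give $2|A_1|+4$, which is too weak. Instead I define the function as follows.
\begin{enumerate}
\item Assign $1$ to each vertex of $C_1$ and $1$ to each vertex of $A_1$. This gives weight $2|A_1|$, still too much.
\item So take a different route: choose one vertex $c\in K$ and put $f(c)=2$. Since $K$ is a clique, this dominates all of $K$ with weight $2$. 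Each vertex of $A_1$ not adjacent to $c$ then receives $1$, which costs at most $|A_1|$.
\item The vertices of $S\setminus A_1$ not adjacent to $c$ still need to be Roman $\{2\}$-dominated. The plan is to choose $c$ inside $B_{i}$ for suitable $i$, using Corollary \ref{remark 2} (ii), that is $C_j\subseteq B_i$ for $j\le i-2$. With this choice $c$ is adjacent to all of $A_j$ with $j\ge 3$ except possibly a bounded number of vertices. The leftover vertices in $A_2$ and $A_\ell$ are then covered by adding a single extra vertex with value $1$, or by pairing with $c$ through item v.
\end{enumerate}
The total weight is $2+|A_1|+1=|A_1|+3$.

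\textbf{Main obstacle.} The hard step is showing that after placing $2$ on a single well-chosen vertex $c$, all remaining undominated vertices of $S\setminus A_1$ can be handled with total extra weight at most $1$. I would follow the case split on $i_{min}$ from the proof of Theorem \ref{gamasplit}. In each case the two-vertex dominating set $\{c_1,c_2\}$ found there has the property that one vertex dominates everything except at most one $A_i$-vertex (Remark \ref{general} and item v). Put $2$ on that vertex and $1$ on the exceptional vertex itself. When $B_i=\emptyset$ for all $i$ ($|A_\ell|=\ell+1$ by item iv), each vertex of $C_\ell$ misses exactly one vertex of $A_\ell$, so $2$ on one vertex of $C_\ell$ plus $1$ on the missed vertex works. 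In every case this gives a Roman $\{2\}$-dominating function of weight $|A_1|+3$.
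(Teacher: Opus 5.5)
\textbf{The upper bound has a gap.} Your lower bound is correct and is the paper's argument: the vertices of $A_1$ are pendants with distinct supports, so Proposition~\ref{stem} applies with $C_1$ as the set of supports. The upper bound, however, rests entirely on the existence of a vertex $c\in K$ that is non-adjacent to at most one vertex of $S\setminus A_1$, and you only assert this.

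The justification you offer is that one of the two vertices of the dominating pair from the proof of Theorem~\ref{gamasplit} works, by Remark~\ref{general} and item v. This fails in general, because that pair is only required to cover $A_2$. Take $K=\{q,b,a_1,a_2,e\}$ and $S=\{p,s_1,s_2,t\}$ with $N(p)=\{q\}$, $N(s_i)=\{b,a_i\}$ and $N(t)=\{q,b,e\}$.
\begin{itemize}
\item This is a prime $\{H_1,H_2,\overline{H_1},\overline{H_2}\}$-free split graph with $A_1=\{p\}$, $A_2=\{s_1,s_2\}$ and $A_3=\{t\}$.
\item $\{a_1,a_2\}\subseteq C_2$ is a legitimate choice of the pair, since it covers $A_2$.
\item Yet $a_1$ misses $s_2$ and $t$, and $a_2$ misses $s_1$ and $t$. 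The right choice here is $b$.
\end{itemize}
So following the $i_{min}$ case split does not tell you which vertex should receive the value $2$. The paper itself does not use a one-$2$-plus-one-$1$ template: it builds functions case by case on $\ell$ and $i_{min}$, sometimes putting $1$ on three vertices of $C_2$.

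\textbf{How to close it.} The missing step has a short, case-free proof that uses only $H_1$- and $\overline{H_1}$-freeness. Let $T=S\setminus A_1$; every vertex of $T$ has degree at least $2$, because $G$ is prime and so has no isolated vertices. Choose $c\in K$ maximizing $|N(c)\cap T|$, and suppose $c$ misses two vertices $m_1,m_2\in T$.
\begin{itemize}
\item If some $y\in K$ were adjacent to both $m_1$ and $m_2$, then $|(N(y)\cap T)\setminus N(c)|\ge 2$. Maximality of $c$ then gives $x_1,x_2\in (N(c)\cap T)\setminus N(y)$, and $\{x_1,x_2,c,y,m_1,m_2\}$ induces $H_1$.
\item Hence $N(m_1)\cap N(m_2)=\emptyset$. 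Two neighbours of $m_1$ and two of $m_2$, together with $m_1$ and $m_2$, then induce $\overline{H_1}$.
\end{itemize}
Both cases are impossible, so $c$ misses at most one vertex $m\in T$. Now set $f(c)=2$, $f(m)=1$, $f(a)=1$ for every $a\in A_1\setminus N(c)$, and $f=0$ elsewhere. This is a Roman $\{2\}$-dominating function of weight at most $|A_1|+3$. With this lemma in place, your route is shorter and more uniform than the paper's case analysis.
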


\begin{proof}
  We consider the different possible values of $\ell$.  
  If $\ell = 1 $, from Remark \ref{remark 1} follows that $G$ is a thin spider with partition $(A_1, C_1, \emptyset)$. By applying Proposition \ref{spider}, we obtain 
$\gamma_{\{R2\}}(G)=|A_1|+1$. If $\ell \geq 2$, from Proposition \ref{stem} and taking into account Remark \ref{remark 1}, follows $\gamma_{\{R2\}}(G)\geq |A_1|+1$. 

 The analysis for the upper bound will be separeted into to three cases:
 
  {\noindent \textbf{Case $\ell$=2: }}    
    From v. in Corollary \ref{remark 2} and Remark \ref{general}, if $B_2=\emptyset$ then $|A_2|=3$ and every vertex in $C_2$ is adjacent to two vertices in $A_2$. Assigning 1 to every vertex in $A_1$ and to two vertices in $C_2$, and zero to every other vertex we get a \r2d function of $G$. Else, if $B_2\neq \emptyset$, assigning 1 to every vertex in $A_1$, 2 to a vertex in $B_2$ and zero to every other vertex we have a \r2d function. In both cases the weight of the function is $|A_1|+2$.
    
  {\noindent  \textbf{Case $\ell$=3:} } 
  
   \emph{Subcase $B_3\neq \emptyset$}: $C_1\subseteq B_3$ from ii in Corollary \ref{remark 2}. If $A_2=\emptyset$, we assign 1 to every vertex in $A_1$ but one, and two to the neighbour of such vertex to get the desired bound. 
   If  $A_2 \neq \emptyset$, and $B_2\neq \emptyset$ assigning 2 to a vertex in $B_2$, and the previous function to every other vertex we get the desired bound. Now let us assume that $B_2=\emptyset$, then $|A_2|=3$ from v. in Corollary \ref{remark 2}. 
   
    If $|A_3|\geq 4$, then $A_3$ is not a thin-stable from item $4$ in Lemma \ref{split} and $|B_3|\leq 1$. But from Remark \ref{remark 1}, $B_3=\emptyset$, which leads to a contradiction.  

   If $|A_3|<4$: from item $5$ in Lemma \ref{split}, $N(s')\cap N(s)\neq \emptyset$ for every $s \in A_3$ and $s'\in A_2$. 
    Then assigning 1 to every vertex in $C_2\cup A_1$ and 0 to every other vertex, we get a $\{2\}$-Roman function of $G$ with weight $|A_1|+3$.  
         
  \emph{Subcase $B_3=\emptyset$}: then $|A_3|=|C_3|=4$ from iv in Corollary \ref{remark 2}, 
  $A_1=\emptyset$ from iii. in Corollary \ref{remark 2} and $C_2\subseteq C_3$ from item $3$ in Lemma \ref{split}. Since $B_3=\emptyset$, there are no two nonadjacent vertices of degree $n-2$. 
   If $A_2\neq \emptyset$ and $B_2=\emptyset$ then $|C_2|=3$ from iv. in Corollary \ref{remark 2} and Remark \ref{remark 1}. By assigning 1 to the vertices in $C_2$ and 0 to every other vertex in $G$, we get a Roman \{2\}-dominating function of $G$, because every vertex in $A_3$ is not adjacent to at most one vertex in $C_2$ ($|C_3\setminus C_2|=1$). Thus, $\gamma_{\{R2\}}(G)\leq|A_1|+3=3$. 
    Otherwise ($B_2\neq \emptyset$), we obtain a Roman \{2\}-dominating function of $G$ by assigning 2 to one (any) vertex in $B_2$, 1 to the vertex in $A_3$ not adjacent to it, and 0 to the remaining vertices. Then, $\gamma_{\{R2\}}(G)\leq|A_1|+3=3$.

  If $A_2=\emptyset$, by assigning 2 to one (any) vertex in $C_3$, 1 to the vertex in $A_3$ not adjacent to it, and 0 to the remaining vertices, we obtain a Roman \{2\}-dominating function of $G$. Thus, $\gamma_{\{R2\}}(G)\leq |A_1|+3=3$.

  {\noindent  \textbf{Case $\ell \geq$ 4:}}
According to iii. in Corollary \ref{remark 2}, since $B_i\subset B_{i+2}$ but possibly $B_i\nsubseteq B_{i+1}$, let us consider the different possibilities for $i_{min}=\min\limits_{B_i\neq\emptyset}\{i¨\in [1, \ell] \} $:
   
   \emph{Subcase $i_{min}\leq 3$}: clearly if $B_1 \neq\emptyset$ then $|A_1|=1$ and $B_1\subseteq B_j$ for every $j\geq 3$. We then proceed analogously to the case $\ell\leq 3$.
  
  \emph{Subcase $4\leq i_{min} \leq \ell-1$}: in this case $A_1= \emptyset$ because $C_1\subseteq B_3=\emptyset$ from item $2$ in Lemma $\ref{split}$. Also, due to vi. in Corollary \ref{remark 2}, $A_i=\emptyset$ for every $i\leq k-1$ but possibly $A_2$. 
  From ii. in Corollary \ref{remark 2}, $C_2\subseteq B_{i_{min}}\cap B_{i_{min}+1}$.

   \begin{itemize}
   \item 
 If $B_{i_{min}}\cap B_{i_{min}+1}=\emptyset$,  let $u$ be a vertex in $B_{i_{min}}$. Then $u\in B_j$ for $j\in [i_{min}, \ell]$ with  $j\neq i_{min}+1$. But also from v. in Corollary \ref{remark 2}, $u$ is not adjacent to at most one vertex in $A_{{i_{min}}+1}$. Then by assigning 2 to $u$, 1 to the vertex in $A_{{i_{min}}+1}$ that is not adjacent to $u$, and 0 to the remaining vertices, we build a Roman $\{2\}$-dominating function of $G$ with weight 3, and then $\gamma_{\{R2\}}(G)\leq 3$.

 \item If $B_{i_{min}}\cap B_{i_{min}+1}\neq \emptyset$ and $A_2\neq \emptyset$, the proof is analogous to the previous case. By assigning 2 to a vertex $u$ in $B_{i_{min}}$, 1 to the vertex in $A_2$ that is not not adjacent to $u$, and 0 to the remaining vertices, we get a Roman $\{2\}$-dominating function of $G$ arriving to $\gamma_{\{R2\}}(G)\leq 3$.

 \item If $B_{i_{min}}\cap B_{i_{min}+1}\neq \emptyset$ and $A_2= \emptyset$, then a vertex in the intersection ($B_{i_{min}}\cap B_{i_{min}+1}$) is a vertex in $B_j$ for every $j\geq i_{min}$ and therefore, a universal vertex. Then $\gamma_{\{R2\}}(G)=2$.
    \end{itemize}

  \emph{Subcase $i_{min}=\ell$}: from iii. in Corollary \ref{remark 2}, $|A_j|=\emptyset$ for $1<j<\ell$, since $B_j=\emptyset$ and $|A_j|\leq 4$. Besides $A_1=\emptyset$ because $A_3=\emptyset$. Then, a vertex in $B_j$ is a universal vertex and $\gamma_{\{R2\}}(G)=2$.
 
  \emph{Subcase $B_i=\emptyset$ for every $i$}:, from iv in Corollary \ref{remark 2} we have $|A_\ell|=\ell+1\geq 4$. 
  Let $s'\in A_\ell$ and consider $A_\ell\setminus \{s'\}$ and $B_\ell'=\bigcap\limits_{s\in A_\ell\setminus \{s'\}}N(s)$. Note that $B_\ell' \neq \emptyset$ from Remark \ref{remark 1}. Then by assigning 2 to some vertex in $B_\ell'$ and 1 to $s'$, we obtain a Roman $\{2\}$-dominating function of $G$ with weight 3, implying $\gamma_{\{R2\}}(G)\leq 3 $. 
    \end{proof}

\section{$\{2\}$-Roman  graph recognition of non decomposable partner limited graphs }\label{sec:sec5}

In the final section of this work, we characterize  the $\{2\}$-Roman property among the non decomposable graphs addressed in the previous section.

\begin{theorem}\label{th:special}
Among the graph classes studied in Section \ref{sec:special},  the only which are $\{2\}$-Roman graphs are: $P_6$, $C_5^u$ with $d(u)=1$, some particular $P_5^u$ and those well labelled spiders obtained by applying operation $R_3$ over every vertex in $S$, graphs $L$, $Q$ and $M_9$
  and  split $\{H_1,H_2,\overline{H_1},\overline{H_2}\}$-free graphs with a universal vertex, or $|A_1|=1$, $|A_2|=2$, $B_2\cap C_1=\emptyset$ and $\ell\geq 3$.
    
\end{theorem}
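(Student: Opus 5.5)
The plan is to decide $\gamma_{\{R2\}}(G)=2\gamma(G)$ family by family, pairing the formulas of Section~\ref{sec:special} with a computation of $\gamma(G)$ in each case. Since $\gamma_{\{R2\}}(G)\le 2\gamma(G)$ always holds, it suffices to show $\gamma(G)\le \gamma_{\{R2\}}(G)/2$, or to exhibit $\gamma(G)>\gamma_{\{R2\}}(G)/2$. This comparison is immediate whenever $\gamma_{\{R2\}}(G)$ is odd, because such graphs are never $\{2\}$-Roman. That observation disposes of all thick spiders ($\gamma_{\{R2\}}=3$ by Proposition~\ref{spider}). It also disposes of the complements of ZOO graphs covered by Proposition~\ref{complement} and the proposition after Proposition~\ref{Mis}. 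Finally it removes $C_5^u$ with $u$ having at least two neighbours, the $M_i$ with $i\in[2,6]\cup\{10,11\}$, $J-\{w\}$, and $P_5^u$ and $P_k$, $C_k$ whenever the value is odd.

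For the even cases, I would compute domination numbers directly from small dominating sets. For $P_k$ and $C_k$ we have $\gamma=\lceil k/3\rceil$, and comparing with $\lceil (k+1)/2\rceil$ and $\lceil k/2\rceil$ leaves only $P_6$ among $k\ge6$ (respectively $k\ge5$). For $C_5^u$ with $d(u)=1$, the value is $4$ and $\gamma=2$. For $P_5^u$ with value $4$, checking whether some pair of vertices dominates isolates the ``particular'' ones. For $J,K,L,Q,J-\{v,w\},M_1,M_7,M_8,M_9$, I would check whether $\gamma=2$ (value $4$) or $\gamma=3$ (for $J$, value $6$). I expect only $L$, $Q$ and $M_9$ to survive, and the others to have $\gamma$ strictly larger or a 2-dominating pair failing.

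For thin spiders, $\gamma=|S|$ (the set $C$ dominates, and the pendant legs force it), while $\gamma_{\{R2\}}=|S|+1<2|S|$ for $r\ge2$; hence thin spiders are never $\{2\}$-Roman. Propositions~\ref{prop:w12} and \ref{prop:R3} then settle well labelled spiders: R1/R2 keep value $|S|+1$ and $\gamma=|S|$, so they fail. R3 on $t$ legs gives $|S|+t$ with $\gamma=|S|$, since $C$ still dominates and each leg still needs a vertex. Equality therefore holds iff $t=|S|$.

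The split case is the main obstacle. Theorems~\ref{gamasplit} and \ref{gamaR2split} give only bounds, $|A_1|\le\gamma\le|A_1|+2$ and $|A_1|+1\le\gamma_{\{R2\}}\le|A_1|+3$. The condition $2\gamma=\gamma_{\{R2\}}$ then forces $2|A_1|\le|A_1|+3$, so $|A_1|\le3$, with a tight list of possibilities. With a universal vertex both values are $1$ and $2$. Otherwise $|A_1|\ge1$ is needed, since for $A_1=\emptyset$ we get $\gamma\le2$ and the split-case computations give $\gamma_{\{R2\}}\le3$. I would then walk through the cases of the proof of Theorem~\ref{gamaR2split} with $|A_1|\in\{1,2,3\}$, showing that $\gamma_{\{R2\}}=|A_1|+3$ and $\gamma=|A_1|+1$ can only both hold when $|A_1|=1$. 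This uses the subcase $\ell=3$, $B_3\neq\emptyset$, $B_2=\emptyset$ with $|A_2|=3$, generalised to $\ell\ge3$, where no vertex of $C_1$ reaches $A_2$. I anticipate reconciling that with the stated condition $|A_2|=2$, $B_2\cap C_1=\emptyset$ will require careful re-checking against items ii and iv of Corollary~\ref{remark 2}.
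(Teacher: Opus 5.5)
Your route is the paper's own: a parity filter, then $2\gamma$ against the formulas of Section~\ref{sec:special} family by family, with the two bounds in the split case. The spider and well labelled spider parts go through as in the paper. The genuine problem is that the checks you defer do not return the stated list.

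\emph{ZOO.} Your expectation that $J$ fails is wrong.
\begin{itemize}
\item The two ends of the underlying $P_6$ and $w$ are pendant vertices with pairwise disjoint closed neighbourhoods, so $\gamma(J)\ge 3$.
\item The second and fifth vertices of that $P_6$, together with $v$, dominate $J$.
\item Hence $\gamma(J)=3$ and, by Proposition~\ref{Jvw}, $\gamma_{\{R2\}}(J)=6=2\gamma(J)$.
\end{itemize}
Your case list also never treats $\overline{M_5}$. It is covered neither by the proposition following Proposition~\ref{Mis}, which excludes $i=5$, nor by Proposition~\ref{complement}, since $\delta(M_5)=2$. In $\overline{M_5}$:
\begin{itemize}
\item $u$ and $v$ are pendant at the first and last vertices of the $P_5$, and these two ends dominate, so $\gamma(\overline{M_5})=2$;
\item each weight-$3$ candidate forced by the two pendant vertices leaves the second or fourth vertex of the $P_5$ undominated, so $\gamma_{\{R2\}}(\overline{M_5})=4$.
\end{itemize}
Both graphs are non decomposable, lie in ZOO, are $\{2\}$-Roman, and are not on the list. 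So the statement, and the paper's ``not hard to verify'', is false as written. Your plan, carried out honestly, refutes it rather than proves it. Any correct version must also examine explicitly every ZOO complement not covered by those propositions.

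\emph{Split case.} You leave this open, and it has the same problem. The bounds allow exactly $(|A_1|,\gamma,\gamma_{\{R2\}})\in\{(0,1,2),(1,1,2),(1,2,4),(2,2,4),(3,3,6)\}$. So for $|A_1|\in\{2,3\}$ the case to exclude is $\gamma=|A_1|$, $\gamma_{\{R2\}}=2|A_1|$, not $\gamma=|A_1|+1$, $\gamma_{\{R2\}}=|A_1|+3$; the paper's three-case list also skips it. It can be excluded as follows.
\begin{itemize}
\item $\gamma=|A_1|$ forces $C_1$ to be dominating.
\item Vertices of $A_i$ with $i\ge3$ see all of $C_1$, by item 2 of Lemma~\ref{split}.
\item So if the function with value $2$ on $C_1\setminus\{c\}$ and value $1$ on the pendant at $c$ fails, some vertex of $A_2$ is adjacent to $c$ and to no other vertex of $C_1$.
\item Two such vertices, at $c_1\ne c_2$, induce $H_1$ together with the pendants at $c_1$ and $c_2$.
\end{itemize}
For $|A_1|=1$ your suspicion is right: the stated condition is not sufficient. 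Suppose the stem $c$ of the pendant vertex is adjacent to exactly one vertex of $A_2=\{x,y\}$, say $x$. Then $N[c]=V\setminus\{y\}$, and value $2$ on $c$ with value $1$ on $y$ has weight $3$. The paper's own proof notes this situation, but $B_2\cap C_1=\emptyset$ does not exclude it. A concrete counterexample:
\begin{itemize}
\item $K=\{c,b,q,k\}$ and $S=\{a,x,y,z\}$, with $N(a)=\{c\}$, $N(x)=\{c,b\}$, $N(y)=\{b,q\}$, $N(z)=\{c,q,k\}$;
\item this graph is prime and $\{H_1,H_2,\overline{H_1},\overline{H_2}\}$-free;
\item it has $|A_1|=1$, $|A_2|=2$, $B_2\cap C_1=\emptyset$ and $\ell=3$;
\item yet $\gamma=2$ and $\gamma_{\{R2\}}=3$.
\end{itemize}
What your route can establish is therefore a corrected classification: add at least $J$ and $\overline{M_5}$, and require at least $C_1\cap C_2=\emptyset$ in the split clause. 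It cannot establish the theorem as stated.
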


\begin{proof}
A $\{2\}$-Roman graph $G$ satisfies $\gamma_{\{R2\}}(G)=2\gamma(G)$, thus clearly those graphs for which $\gamma_{\{R2\}}(G)$ is odd are not $\{2\}$-Roman graphs. 

From Proposition  \ref{spider}, for a spider $(S,C,H)$, the $\{2\}$-Roman domination number is equal to $|S|+1$ for a thin spider and equal  to 3 for a thick spider. Besides, if $G$ is a thin spider, $\gamma(G)=|S|$ (since it has $|S|$ pendant vertices). Thus, any  spider is not a $\{2\}$-Roman graph since $|S|\geq 2$ from definition.

For  a well labelled spider $W$ obtained from a thin spider $(S,C,H)$, when $R_1$ or $R_2$ is applied we have $\gamma_{\{R2\}}(W)=|S|+1$ from Proposition \ref{prop:w12}, and $\gamma_{\{R2\}}(W)=|S|+t$ from Proposition \ref{prop:R3} when $R_3$ is applied over $t$ vertices of $S$.
In any case, $C$ is a minimum  dominating set in $W$,  thus $\gamma(W)=|S|$. This implies that $W$ is a $\{2\}$-Roman graph  only if operation $R_3$ is applied over each vertex  of $S$. 

For the graphs in $ZOO$ which $\gamma_{\{R2\}}(G)$ is even, it is not hard to verify that $C_5^u$ with $d(u)=1$,  $L$, $Q$ and $M_9$, and also some particular $P_5^u$s, are the only $\{2\}$-Roman graphs. 

    For a split $\{H_1,H_2,\overline{H_1},\overline{H_2}\}$-free graph $G$, since $|A_1|+1\leq \gamma_{\{R2\}}(G)\leq |A_1|+3$ and $|A_1|\leq \gamma(G) \leq |A_1|+2$, it is clear that $G$ is a $\{2\}$-Roman graph if and only if any of the following conditions holds:
    \begin{enumerate}
        \item $|A_1|=1$, $\gamma(G)=1$ and $\gamma_{\{R2\}}(G)=2$,
        \item $A_1=\emptyset$, $\gamma(G)=1$ and $\gamma_{\{R2\}}(G)=2$ or 
        \item $|A_1|=1$, $\gamma(G)=|A_1|+1=2$ and $\gamma_{\{R2\}}(G)=|A_1|+3=4$.
       \end{enumerate}
    Notice that the first two cases imply that $G$ has a universal vertex. 
    
    For the third case, let $D=\{u,v\}$ be a dominating set in $G$. Since every vertex in $A_1$ or its neighbour (in $C_1$) belongs to $D$, for optimality purpose we  consider $C_1\subseteq D$, let us say $C_1=\{u\}$. Then, every vertex not adjacent to $u$ belongs to $A_2$ and $v\in B_2$. If there is only one vertex not adjacent to $u$, clearly $\gamma_{\{R2\}}(G)\leq 3$. The same happens if $A_3=\emptyset$ (and then $|A_2|= 2$).
    Regarding Roman $\{2\}$-domination, notice that if $\ell\geq 3$, a weight of at least 2 is needed for  the vertices in $A_1\cup A_3$ to be dominated. In fact, since  $C_1\subseteq B_3$, we have $A_1\cup A_3 \subseteq S$. Besides, because $C_1\cap C_2$ and $B_3\cap C_2$ could be both empty, a weight 2 is needed for  the vertices in $A_2$. Also, since $B_2\cap C_1 = \emptyset$, there are no three vertices in $K$ such that every other vertex is adjacent to two of them. Therefore $\gamma_{\{R2\}}(G)=4$ and $G$ is a $\{2\}$-Roman graph. 
\end{proof}

\begin{corollary}
    Given a non decomposable partner limited graph $G$,  it is  polynomial time recognizable if $G$ is $\{2\}$-Roman, i.e. if $G$ is one of the graphs covered by Theorem \ref{th:special}. 
\end{corollary}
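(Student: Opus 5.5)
The statement to prove is the final Corollary: for a non decomposable partner limited graph $G$, one can decide in polynomial time whether $G$ is $\{2\}$-Roman. The plan is to combine the structural theorem of \cite{roussel1999} with Theorem \ref{th:special}, so the corollary reduces to showing that each ingredient can be computed in polynomial time.

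\textbf{Step 1: reduce to the listed families.} Since $G$ is partner limited and non decomposable (both $G$ and $\overline{G}$ connected), the characterization from \cite{roussel1999} says $G$ is exactly one of the following: a clique, a stable set, a well labelled spider, a graph in ZOO, or a prime split $\{H_1,H_2,\overline{H_1},\overline{H_2}\}$-free graph. Non decomposability excludes stable sets with at least two vertices (they are disconnected) and cliques with at least two vertices (their complement is disconnected). The single vertex is not $\{2\}$-Roman by the introduction.

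\textbf{Step 2: identify which family $G$ belongs to, in polynomial time.}
\begin{itemize}
\item Split graphs are recognized in linear time by \cite{Hammer1981}. Freeness from the four six-vertex graphs is checked by brute force in $O(n^6)$ time.
\item Spiders and their partition $(S,C,H)$ are found in linear time by \cite{Jamison1992}. A well labelled spider arises from a thin spider by one operation R1, R2 or R3, each of which only adds twins. So I would first contract twin classes: identify the underlying thin spider and record which endpoints of legs were duplicated, and how many times, in order to tell which operation was used and on how many vertices $t$ of $S$.
\item ZOO consists of infinite families (paths $P_k$, cycles $C_k$, and their complements) together with finitely many fixed graphs of bounded size. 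Membership in the path or cycle families is checked by degree sequence and connectivity, applied to $G$ or to $\overline{G}$. Each fixed graph is checked by a constant-size isomorphism test.
\end{itemize}

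\textbf{Step 3: test the criterion of Theorem \ref{th:special} for the identified family.}
\begin{itemize}
\item For a well labelled spider, check whether R3 was applied to every vertex of $S$, i.e.\ whether $t=|S|$.
\item For ZOO members, compare against the explicit finite list in Theorem \ref{th:special}, plus the membership check for $P_6$. The condition "some particular $P_5^u$" has bounded size, so it can be settled by computing $\gamma$ and $\gamma_{\{R2\}}$ exhaustively in constant time.
\item For split graphs, compute $\ell$ and the sets $A_i$, $B_i$, $C_i$ directly from neighborhoods in $O(n^2)$ time. Then test for a universal vertex, or for the conditions $|A_1|=1$, $|A_2|=2$, $B_2\cap C_1=\emptyset$ and $\ell\ge3$.
\end{itemize}
All of these tests are polynomial.

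\textbf{Main obstacle.} The hardest step is the well labelled spider case: recovering which operation was applied, and $t$, uniquely from $G$. R1 and R3 can both create twins of vertices in $S$, and a single R3 twin could look like a partial R1. I would handle this by computing the maximal twin classes. Each class sits inside $S$ or inside $C$, and has size at most $3$. The class sizes on the legs determine the operation: one class of size $3$ indicates R1, doubled classes on both ends of one leg indicate R2, and doubled $S$-ends on several legs indicate R3. If the decomposition is ambiguous, a fallback is to apply Propositions \ref{prop:w12} and \ref{prop:R3} to each candidate reading, since only polynomially many exist.
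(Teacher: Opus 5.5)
Your proposal is correct and follows essentially the same route as the paper. Both arguments use the Roussel et al.\ characterization to reduce to ZOO graphs, well labelled spiders and prime split $\{H_1,H_2,\overline{H_1},\overline{H_2}\}$-free graphs, and then check the conditions of Theorem~\ref{th:special} in each family in polynomial time; your brute-force treatment of the bounded-size ZOO graphs, including the unspecified $P_5^u$'s, is more explicit than the paper's. The paper sidesteps your ``main obstacle'' with a local neighbourhood test (each vertex of $C$ must have exactly two neighbours in $S$, or, in the split branch with $\ell=1$, two pendant neighbours), which is just as well: R1 with an internal $P_3$ or $K_2\cup K_1$ does not give a twin class of size $3$, so your twin-class reading would need iterated contraction or modular decomposition.
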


\begin{proof}
Let $G$ be a non decomposable partner-limited graph in the sense that both $G$ and $\overline{G}$ are connected. Then $G$ is a spider, a well labelled spider, a graph in the ZOO or else, a prime split $\{H_1,H_2,\overline{H_1},\overline{H_2}\}$-free graph. Since graphs in ZOO are cycles or paths or else have at most nine vertices, the conditions of Theorem \ref{th:special} for $G$ to be $\{2\}$-Roman can be all checked clearly in polynomial time. If $G$ is not any of the  graphs in ZOO, we can check in linear time if it is split or not \cite{Hammer1981}. If $G$ is not a split graph, it has to be a well labelled spider. Notice that, if $G$ is a $\{2\}$-Roman graph that is also a well labelled spider obtained from a spider  $(S,C,K)$, then every vertex in  $C$ has none or exactly two neighbours in $S$ (this is linear time checkable). 
Else (if $G$ is  split), sets $A_1$ to $A_{\ell}$ defined in Section 3 must be found. When $\ell=1$ and $|A_1|>1$, we have to check that every vertex in the clique $C$ has two neighbors that are not adjacent to other vertices in $C$, to conclude that $G$ is $\{2\}$-Roman (corresponding to a well labelled spider obtained by applying R3). When $\ell \geq 3$ and $|A_1|=1$, we have to check if $|A_2|=2$ and $B_2\cap C_1=\emptyset $ to conclude that $G$ is $\{2\}$-Roman.
\end{proof}

\section{Conclusion and future work}\label{sec:conclusions}

The class of $\{2\}$-Roman graphs is relatively new. The $\{2\}$-Roman graph recognition problem was posed in a very recent work, polinomially solved only for middle graphs and with open complexity for general graphs \cite{vStorgel2025}.
To the best of our knowledge and as reported in the cited literature, no
other study is known for the class of $\{2\}$-Roman graphs. 

In this work we have contributed to the study of this open problem.

Future research may explore graph classes for which their modular decomposition is known, beyond those graph classes with a limited number of 4-paths, such as  $P_4$-laden graphs and extended $P_4$-laden graphs. 

\section*{Acknowledgements}
This work was supported by  Consejo Nacional de Invesigaciones Científicas y Técnicas through project PIP 0227, and Universidad Nacional de Rosario through project 80020220700042UR. 

\bibliographystyle{abbrv} 
\bibliography{biblio}

\end{document}